\UseRawInputEncoding% AMS-LaTeX 1.2
\documentclass[11pt]{amsart}
\usepackage{nccmath}

\oddsidemargin 3ex
\evensidemargin 3ex
\textheight 7.9in
\textwidth 6.0in

%\font\bbbld=msbm10 scaled\magstep1

\usepackage{amsmath,amsthm,amssymb}
\usepackage{times}
\usepackage{enumerate}
\usepackage{color}
\newcommand{\ubar}{\underline}

\newtheorem{prop}{Proposition}
\newtheorem{theo}[prop]{Theorem}
\newtheorem{lemm}[prop]{Lemma}

\newtheorem{rmk}[prop]{Remark}

\newcommand{\be}{\begin{equation}}
\newcommand{\ee}{\end{equation}}
\newcommand{\lt}{\left}
\newcommand{\rt}{\right}
\newcommand{\goto}{\rightarrow}
\newcommand{\al}{\alpha}
\newcommand{\e}{\epsilon}
\renewcommand{\leq}{\leqslant}
\renewcommand{\geq}{\geqslant}

\newcommand{\R}{\mathbb{R}}
\newcommand{\M}{\mathcal{M}}
\newcommand{\ka}{\kappa}
\newcommand{\la}{\lambda}

\newcommand{\s}{\sigma}
\newcommand{\ga}{\gamma}
\newcommand{\p}{\partial}
\newcommand{\uus}{\bar{u}^*}
\newcommand{\lus}{\ubar{u}^*}

\newcommand{\gas}{\gamma^*}

\newcommand{\dS}{\mathbb{S}}
\newcommand{\m}{\mathfrak{m}}

%% Equations numbered by section:

\numberwithin{equation}{section}

\begin{document}
\setlength{\baselineskip}{1.2\baselineskip}

\title[Entire self-expanders in Minkowski space]
{Entire self-expanders for power of $\sigma_k$ curvature flow in Minkowski space}

\author{Zhizhang Wang}
\address{School of Mathematical Science, Fudan University, Shanghai, China}
\email{zzwang@fudan.edu.cn}
\author{Ling Xiao}
\address{Department of Mathematics, University of Connecticut,
Storrs, Connecticut 06269}
\email{ling.2.xiao@uconn.edu}
\thanks{2010 Mathematics Subject Classification. Primary 53C42; Secondary 35J60, 49Q10, 53C50.}
\thanks{Research of the first author is  sponsored by Natural Science  Foundation of Shanghai, No.20JC1412400,  20ZR1406600 and supported by NSFC Grants No.11871161,12141105.}

\begin{abstract}
In \cite{WX22}, we prove that if an entire, spacelike, convex hypersurface $\M_{u_0}$ has bounded principal curvatures, then the $\sigma_k^{1/\al}$ (power of $\s_k$) curvature flow
starting from $\M_{u_0}$ admits a smooth convex solution $u$ for $t>0.$ Moreover, after rescaling, the flow converges to a convex self-expander $\tilde{\M}=\{(x, \tilde{u}(x))\mid x\in\R^n\}$ that satisfies $\s_k(\ka[\tilde{\M}])=(-\lt<X_0, \nu_0\rt>)^\al.$ Unfortunately, the existence of self-expander for power of $\sigma_k$ curvature flow in Minkowski space has not been studied before. In this paper, we fill the gap.

\end{abstract}

\maketitle
\section{Introduction}
Let $\R^{n, 1}$ be the Minkowski space with the Lorentzian metric
\[ds^2=\sum_{i=1}^{n}dx_{i}^2-dx_{n+1}^2.\]
In this paper, we will devote ourselves to the study of spacelike hypersurfaces $\M$ with prescribed
$\sigma_k$ curvature in Minkowski space $\R^{n, 1}$. Here, $\s_k$ is the $k$-th elementary symmetric polynomial, i.e.,
 \[\s_k(\ka)=\sum\limits_{1\leq i_1<\cdots<i_k\leq n}\ka_{i_1}\cdots\ka_{i_k}.\]
Any such hypersurface $\M$ can be written locally as a graph of a function
$x_{n+1}=u(x), x\in\R^n,$ satisfying the spacelike condition
\be\label{int1.1}
|Du|<1.
\ee
More specifically, we will study self-similar solutions of
flow by powers of the $\s_k$ curvature. Namely, we are interested in entire, spacelike, convex hypersurfaces which move under $\s_k$ curvature flows by homothety.

Let $X(\cdot, t)$ be a spacelike, strictly convex solution of
\be\label{hs1}
\frac{\p X}{\p t}(p, t)=\s_k^\beta(p, t)\nu(p, t)
\ee
for some $\beta\in(0, \infty).$ If the hypersurfaces $\M(t)$ given by $X(\cdot, t)$ move homothetically, then
$X(\cdot, t)=\phi(t)X_0$ for some positive function $\phi.$ Since the normal vector field is unchanged by homotheties,
by taking the inner product of \eqref{hs1} with $\nu_0=\nu(\cdot, t)$ we obtain
\[\phi'\lt<X_0, \nu_0\rt>=-\s_k^\beta(\ka[X_0])\phi^{-k\beta},\]
where $\ka[X_0]=(\ka_1, \cdots, \ka_n)$ is the principal curvatures of $\M_0$ at $X_0.$
Therefore, we must have
\[\phi'(t)\phi^{k\beta}(t)=\lambda\]
and
\[\s_k^\beta=-\lambda\lt<X_0, \nu_0\rt>.\]
In this paper, we will consider the case when $\lambda>0,$ which we call {\it expanding solutions}. Through rescaling, we may also assume $\lambda=1.$

Complete noncompact self similar solutions of curvature flows in Euclidean space have been studied intensively (for example \cite{AW94, CSS07, CDL21, HIMW19, SW10, SX20, Urb98}).
However, in Minkowski space, there is no corresponding known result yet.

It is well-known that the hyperboloid is a self-expander. In \cite{WXflow}, we have proved the rescaled convex curvature flows, including Gauss curvature flow, converge to the hyperboloid. Therefore, a natural question to ask is whether there exist self-expanders other than the hyperboloid? Moreover, if such self-expanders exist, can we construct some curvature flows such that their rescaled flows converge to these new self-expanders? In this paper and an upcoming paper \cite{WX22}, we give affirmative answers to both questions.

Now consider $\M_{u_0}=\{(x, u_0(x))\mid x\in\R^n\},$ an entire, spacelike, convex hypersurface satisfying
$u_0(x)-|x|\goto\varphi\lt(\frac{x}{|x|}\rt)$ as $|x|\goto\infty.$ By translating $\M_{u_0}$ vertically we may also assume $\varphi\lt(\frac{x}{|x|}\rt)>0.$
In an upcoming paper \cite{WX22}, we prove that, if in addition $\M_{u_0}$ also has bounded principal curvatures, then the equation
\[
\lt\{
\begin{aligned}
\frac{\p X}{\p t}&=\s_k^{1/\al}\nu\\
X(x, 0)&=\M_{u_0},
\end{aligned}
\rt.
\]
where $\al\in(0, k],$ admits a smooth convex solution $u$ for $t>0.$ Moreover, after rescaling the flow converges to a convex self-expander $\tilde{\M}=\{(x, \tilde{u}(x))\mid x\in\R^n\}$ that satisfies
\be\label{int0.1}
\s_k(\ka[\tilde{\M}])=(-\lt<X, \nu\rt>)^\al
\ee
and
\be\label{int0.2}
\tilde{u}-|x|\goto\varphi\lt(\frac{x}{|x|}\rt)\,\,\mbox{as $|x|\goto\infty.$}
\ee
Unfortunately, the existence of solutions of equations \eqref{int0.1} and \eqref{int0.2} have not been studied before. In this paper, we fill the gap and prove the following theorems.
\begin{theo}
\label{int-thm1}
Suppose $\varphi$ is a positive $C^2$ function defined on $\mathbb{S}^{n-1},$ i.e., $\varphi\in C^2(\mathbb{S}^{n-1}).$ Then there exists a unique, entire, strictly convex, spacelike hypersurface
$\M_u=\{(x, u(x))\mid x\in\R^n\}$ satisfying
\be\label{int0.1'}
\s_n(\ka[\M_u])=(-\lt<X, \nu\rt>)^\al\,\,\mbox{for any $\al\in(0, n],$}
\ee
and
\be\label{int0.2'}
u(x)-|x|\goto\varphi\lt(\frac{x}{|x|}\rt)\,\,\mbox{as $|x|\goto\infty.$}
\ee
\end{theo}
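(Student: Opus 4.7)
The plan is to treat \eqref{int0.1'}--\eqref{int0.2'} as an asymptotic Dirichlet problem and solve it by exhausting $\R^n$ with balls. Writing $\M_u$ as a graph one has
\[-\lt<X, \nu\rt>=\frac{u-x\cdot Du}{\sqrt{1-|Du|^2}},\qquad \s_n(\ka[\M_u])=\frac{\det D^2u}{(1-|Du|^2)^{(n+2)/2}},\]
so \eqref{int0.1'} is equivalent to the Monge--Amp\`ere equation
\[\det D^2u=(1-|Du|^2)^{(n+2-\al)/2}(u-x\cdot Du)^\al.\]
For each large $R$ I would solve the Dirichlet problem for this equation on $B_R$ with boundary data $g_R$, a smooth strictly convex approximation of $|x|+\varphi(x/|x|)$ restricted to $\p B_R$, obtain a spacelike strictly convex smooth solution $u_R$, and then send $R\to\infty$.

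The argument rests on $R$-independent a priori estimates. For the $C^0$ and the spacelike ($|Du|<1$) bounds I would sandwich $u_R$ between two global barriers. The hyperboloid $\sqrt{1+|x|^2}$ is itself a solution, since for it $\s_n\equiv 1\equiv(-\lt<X,\nu\rt>)^\al$, so the positivity of $\varphi$ makes it a natural lower barrier, while an upper barrier can be built from a smooth strictly convex function that agrees with $|x|+\varphi(x/|x|)+\e$ outside a large compact set and whose $-\lt<X,\nu\rt>$ is large enough to turn the differential inequality into a supersolution. These sandwich $u_R$ uniformly, and convexity combined with a local $C^0$ bound bounds $|Du_R|$ strictly below $1$ on any fixed compact set. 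The main obstacle is the interior $C^2$ estimate, because the equation degenerates as $|Du|\to 1$. I would adapt the maximum-principle calculation used by the authors in the companion works \cite{WXflow,WX22}: apply the linearized operator to a test function of the form $\log\la_{\max}(u_{ij})+\phi(|Du|^2)-Au$, and use concavity of $(\det D^2u)^{1/n}$ in the Hessian together with monotonicity of the right hand side in $u$ to absorb the third-order and bad gradient terms, obtaining a pointwise interior bound on $D^2 u$ depending only on the already-controlled $C^0$--$C^1$ data. Evans--Krylov and Schauder estimates then promote this to uniform local $C^{2,\ga}$ bounds, and a diagonal subsequence as $R\to\infty$ delivers an entire smooth strictly convex spacelike solution $u$.

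To verify the asymptotics \eqref{int0.2'} I would, for every small $\e>0$, construct a supersolution $\bar u_\e$ and a subsolution $\ubar u_\e$ defined in the exterior of a large ball with $\bar u_\e(x)-|x|\to\varphi(x/|x|)+\e$ and $\ubar u_\e(x)-|x|\to\varphi(x/|x|)-\e$; the comparison principle applied on each annulus, letting the inner radius fill out, sandwiches $u$ between them, and sending $\e\to 0$ pins down the asymptotic profile. Uniqueness follows from the same comparison principle: if $u_1,u_2$ are two solutions with identical $\varphi$, then $u_1-u_2\to 0$ at infinity, so any positive value of $u_1-u_2$ is attained at an interior maximum point $x_0$; there $Du_1=Du_2$ and $D^2u_1\leq D^2u_2$ give $\s_n(\ka[u_1])\leq\s_n(\ka[u_2])$, whereas at fixed $(x,Du)$ the quantity $(-\lt<X,\nu\rt>)^\al$ strictly increases in $u$, forcing $\s_n(\ka[u_1])>\s_n(\ka[u_2])$, a contradiction unless $u_1\equiv u_2$.
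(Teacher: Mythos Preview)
Your route is genuinely different from the paper's. You work directly with $u$ on $\R^n$ and exhaust by balls $B_R$; the paper instead passes to the Legendre transform $u^*$, converting \eqref{int0.1'}--\eqref{int0.2'} into the degenerate Dirichlet problem \eqref{gcf2} on $B_1$, where the asymptotic profile becomes the finite boundary datum $u^*|_{\p B_1}=-\varphi$. That problem is approximated by the nondegenerate equations \eqref{gcf-approx} with parameter $s\nearrow 1$, and the technical core of the proof is the package of local $C^1$ and $C^2$ estimates for $u^{s*}$ (Lemmas~\ref{loc-c1-lem1}--\ref{gcf-local-c2-lem}) that survive the limit even though $|Du^{s*}|\to\infty$ at $\p B_1$; Lemma~\ref{lem-5.6*} then guarantees $Du^*(B_1)=\R^n$, so that the inverse Legendre transform is entire.

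Your outline, by contrast, has a real gap in the barriers. The upper barrier you describe---a smooth convex function equal to $|x|+\varphi(x/|x|)+\e$ outside a compact set---has $|D\bar u|=1$ there, so it is not spacelike and $-\lt<X,\nu\rt>$ is undefined; the inequality you want does not make sense. More seriously, pinning down \eqref{int0.2'} requires a \emph{subsolution} $\lu_\e$ with $\s_n(\ka[\lu_\e])\geq(-\lt<X,\nu\rt>)^\al$ and $\lu_\e-|x|\to\varphi-\e$. But for any entire spacelike convex graph with strictly positive asymptotic value one has $-\lt<X,\nu\rt>=(-u^*)/\sqrt{1-|\xi|^2}\to\infty$ as $|\xi|\to 1$ (this is exactly the unboundedness flagged in Remark~\ref{rmk1}), so no constant--$\s_n$ hypersurface from \cite{AML,BP,GJS} can serve, and you have given no other construction. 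Producing such a subsolution is essentially equivalent to the theorem itself, which makes your asymptotic step circular. The Legendre-transform approach sidesteps both issues: the asymptotic condition is a genuine boundary condition, and the unboundedness of $(-\lt<X,\nu\rt>)^\al$ reappears as the degeneracy of \eqref{gcf2} at $\p B_1$, which the authors control with Lemmas~\ref{c2-local-aux} and \ref{gcf-local-c2-lem}. Your uniqueness argument via the comparison principle is fine and matches the monotonicity the paper also relies on.
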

\begin{rmk}\label{rmk1}
Note that, unlike previously known results on spacelike hypersurfaces with prescribed Gauss curvature (see \cite{BP, GJS, AML, RWX20}), the right hand side of
\eqref{int0.1} is unbounded. Therefore, the proof of Theorem \ref{int-thm1} is different from earlier works, and we need to develop new techniques to prove it.
\end{rmk}
Using the solution we obtained in Theorem \ref{int-thm1} as a subsolution we can also prove
\begin{theo}
\label{int-thm2}
Suppose $\varphi$ is a positive $C^2$
function defined on $\mathbb{S}^{n-1},$ i.e., $\varphi\in C^2(\mathbb{S}^{n-1}).$ Then there exists a unique, entire, strictly convex, spacelike hypersurface
$\M_u=\{(x, u(x))\mid x\in\R^n\}$ satisfying
\be\label{int0.1'}
\s_k(\ka[\M_u])=(-\lt<X, \nu\rt>)^\al\,\,\mbox{for any $\al\in(0, k],$}
\ee
and
\be\label{int0.2'}
u(x)-|x|\goto\varphi\lt(\frac{x}{|x|}\rt)\,\,\mbox{as $|x|\goto\infty,$}
\ee
where $k\leq n-1.$
\end{theo}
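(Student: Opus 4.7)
The plan is to follow the same overall blueprint as Theorem \ref{int-thm1}, but to employ the entire $\s_n$ self-expander produced there, with a rescaled exponent, as a global subsolution for the $\s_k$ equation. Concretely, for $\al\in(0,k]$ set $\td{\al}:=\al n/k\in(0,n]$ and apply Theorem \ref{int-thm1} with exponent $\td{\al}$ to obtain a smooth, strictly convex, entire spacelike graph $\M_{\lu}$ with
\be\label{prop1}
\s_n(\ka[\M_{\lu}])=(-\lt<X,\nu\rt>)^{\td{\al}},\qquad \lu(x)-|x|\goto\varphi\lt(\frac{x}{|x|}\rt)\text{ as }|x|\goto\infty.
\ee
Since the principal curvatures of $\M_{\lu}$ are positive, the Newton--Maclaurin inequality gives
\[\s_k(\ka[\M_{\lu}])\geq \binom{n}{k}\,\s_n(\ka[\M_{\lu}])^{k/n}=\binom{n}{k}(-\lt<X,\nu\rt>)^{\al}\geq(-\lt<X,\nu\rt>)^{\al},\]
so $\lu$ is an admissible strict subsolution of \eqref{int0.1'} with precisely the desired asymptotic behavior. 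For an upper barrier I would take a translated hyperboloid $\uu(x)=\sqrt{|x|^2+a^2}+M$, choosing $a$ large enough that $a^{k+\al}\geq\binom{n}{k}$ and $M\geq\max_{\mathbb{S}^{n-1}}\varphi$ large enough that $\uu\geq\lu$ everywhere; this is a supersolution since $-\lt<X,\nu\rt>=a+(M/a)\sqrt{|x|^2+a^2}\geq a$ along $\M_{\uu}$ while $\s_k(\ka[\M_{\uu}])=\binom{n}{k}a^{-k}\leq a^\al$.

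With $\lu$ and $\uu$ in hand, I would solve the Dirichlet problem for \eqref{int0.1'} on $B_R$ with boundary data $u|_{\p B_R}=\lu|_{\p B_R}$ by the continuity method within the class of $k$-admissible, spacelike, strictly convex graphs. The crucial a priori estimates are: (i) the $C^0$ sandwich $\lu\leq u\leq\uu$ from comparison; (ii) the strict spacelike bound $|Du|<1$, which follows in the interior from convexity together with the barriers and on $\p B_R$ from the matched boundary data; and (iii) interior and boundary $C^2$ estimates, which I expect to be the main obstacle. Because $(-\lt<X,\nu\rt>)^\al$ is unbounded in $|X|$ and because for $k<n$ the operator $\s_k^{1/k}$ lacks the full Monge--Amp\`ere structure exploited in Theorem \ref{int-thm1}, the second derivative estimate must be reworked: I would use the strict subsolution $\lu$ to construct a test function controlling the largest principal curvature, in the spirit of the Guan--Ren--Wang strategy for $\s_k$ equations, and combine $\lu$ as an inner barrier with the standard tangential--tangential, tangential--normal, and double--normal computations for the boundary estimate. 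Concavity of $\s_k^{1/k}$ on the positive cone then upgrades to $C^{2,\al}$ via Evans--Krylov.

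Letting $R\goto\infty$ and extracting a $C^{2,\al}_{\mathrm{loc}}$-convergent subsequence gives a smooth, strictly convex, entire spacelike solution $u$ of \eqref{int0.1'} trapped between $\lu$ and $\uu$. To obtain \eqref{int0.2'} I would refine the upper barrier by adding to $\lu$ a nonnegative perturbation that decays at infinity while still sitting above the would-be Dirichlet solution inside each $B_R$, so that $u-|x|$ is squeezed between two functions both tending to $\varphi$, forcing $u-|x|\goto\varphi$. Uniqueness follows from the maximum principle: for two solutions $u_1, u_2$ of \eqref{int0.1'}--\eqref{int0.2'} the difference tends to zero at infinity, and at any interior positive maximum the ellipticity of the linearized $\s_k$ operator on $k$-admissible graphs together with the strict monotonicity of $(-\lt<X,\nu\rt>)^\al$ in $u$ forces $u_1\equiv u_2$. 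As already flagged in Remark \ref{rmk1}, the hard part throughout is the $C^2$ estimate against an unbounded right hand side, now further complicated by the weaker structure of $\s_k$ for $k<n$.
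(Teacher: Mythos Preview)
Your central idea --- use the entire $\s_n$ self-expander from Theorem \ref{int-thm1} with exponent $\td{\al}=\al n/k$ together with Newton--Maclaurin to produce a subsolution of \eqref{int0.1'} with the correct asymptotic $\varphi$ --- is exactly what the paper does. Beyond that point, however, your execution diverges from the paper's and leaves real gaps at precisely the places you yourself flag as hard.

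First, the paper does \emph{not} solve Dirichlet problems for the graph function on balls $B_R\subset\R^n$. It passes to the Legendre transform and solves the dual equation $\frac{\s_n}{\s_{n-k}}(w^*\gamma^*_{ik}u^*_{kl}\gamma^*_{lj})=(w^*/(-u^*))^\al$ on $B_r\subset B_1$, with boundary data $\lus$ on $\p B_r$; the global $C^2$ estimate is then carried out in the hyperbolic model for $v=u^*/w^*$ via a maximum principle for $\Lambda_{ij}=v_{ij}-v\delta_{ij}$. This is not a cosmetic choice: in the primal variables the right-hand side $(-\langle X,\nu\rangle)^\al$ depends on $u$ and $Du$ and is unbounded, and the ``Guan--Ren--Wang strategy'' you invoke does not directly apply.

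Second, your translated hyperboloid has constant asymptotic $M$, not $\varphi$, so the sandwich $\lu\leq u\leq\uu$ cannot by itself yield \eqref{int0.2'}; your proposed ``refinement'' is not specified. The paper instead takes $\uu$ to be an entire constant--$\s_k$--curvature hypersurface with asymptotic $\varphi$ (available from \cite{WX20}), which immediately pins down the asymptotic after comparison.

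Third, to pass to the limit ($r\to 1$, equivalently $R\to\infty$) one needs \emph{local} estimates uniform in the approximation parameter. Your barriers give only $|Du|\leq 1$ in the limit, not a quantitative bound on $1/\sqrt{1-|Du|^2}$ on compacta. The paper obtains this via a second $\s_n$ self-expander $\ubar{u}_1$ (with a different constant) and the Bayard--Schn\"urer comparison lemma; the local $C^2$ estimate is then a Pogorelov-type bound $\max_{\{u_r\leq s\}}(s-u_r)\ka_{\max}\leq C$ proved with the test function $m\log(s-u)+\log P_m-mN\langle\nu,E\rangle$. None of these ingredients appear in your sketch, and they are where the actual work lies.
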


The paper is organized as follows. In Section \ref{gcf}, we prove Theorem \ref{int-thm1}.
In particular, we develop new techniques to prove the local estimates.
In Section \ref{skc}, combining the result we obtained in Section \ref{gcf} with our ideas developed in \cite{RWX20} and \cite{WX20},
we prove Theorem \ref{int-thm2}. The arguments in this section are modifications of our arguments in \cite{RWX20} and \cite{WX20}.

\section{Gauss curvature self-expander}
\label{gcf}
In this section, we want to show there exists an entire, strictly spacelike, convex solution to the following equation
\be\label{gcf1}
\s_n(\ka[\M_u])=\lt(-\lt<X, \nu\rt>\rt)^\al,\,\,0<\al\leq n,
\ee
and
\be\label{gcf1*}
u(x)-|x|\goto\varphi\lt(\frac{x}{|x|}\rt)\,\,\mbox{as $|x|\goto\infty,$}
\ee
where $\varphi$ is a positive function defined on $\dS^{n-1}.$
Let $u^*$ be the Legendre transform of $u.$ By Section 3 of \cite{AML} and Lemma 14 in \cite{WX20}, we know when $u$ is a solution of \eqref{gcf1} and \eqref{gcf1*}, then
$u^*$ satisfies the following PDE,
\be\label{gcf2}
\left\{
\begin{aligned}
\s_n(D^2u^*)&=\frac{w^{*\al-n-2}}{(-u^*)^\al}\,\,&\mbox{in $B_1$}\\
u^*&=\varphi^*(\xi)\,\,&\mbox{on $\p B_1,$}
\end{aligned}
\right.
\ee
where $\varphi^*=-\varphi<0$ on $\p B_1$ and $w^*=\sqrt{1-|\xi|^2}.$
Since \eqref{gcf2} is a degenerate equation, we will study the following approximate problem instead
\be\label{gcf-approx}
\left\{
\begin{aligned}
\s_n(D^2u^*)&=\frac{(1-s|\xi|^2)^{(\al-n-2)/2}}{(-u^*)^\al}\,\,&\mbox{in $B_1$}\\
u^*&=\varphi^*(\xi)\,\,&\mbox{on $\p B_1,$}
\end{aligned}
\right.
\ee
where $0<s<1.$
\begin{rmk}
\label{gcf-remk1}
In this remark, we want to explain why $\al$ needs to be less than or equal to $n.$ Note that here we want to construct an entire solution to equation \eqref{gcf1}. This requires
$|Du^*(\xi)|\goto\infty$ as $\xi\goto\p B_1.$ One can see that if $u^*$ is a solution to \eqref{gcf2} then we have
\[
\begin{aligned}
&\int_{B_1}\det D^2u^*=\int_{Du^*(B_1)} 1\\
&\sim\int_{B_1}(1-|\xi|^2)^{\frac{\al-n-2}{2}}\sim\int_{1/2}^1r^{n-1}(1-r^2)^{\frac{\al-n-2}{2}}dr\sim\int_{1/2}^1(1-r^2)^{\frac{\al-n-2}{2}}dr^2.\\
\end{aligned}
\]
Since $Du^*(B_1)=\R^n,$ we know $\int_{1/2}^1(1-r^2)^{\frac{\al-n-2}{2}}dr^2$ blows up, which implies $\al\leq n.$
\end{rmk}
\subsection{Solvability of equation \eqref{gcf-approx}}
We will show there exists a solution $u^{s*}$ of \eqref{gcf-approx} for $0<s<1$. For our convenience, in the following, when there is no confusion, we will drop the superscript $s$ and denote
$u^{s*}$ by $u^*.$
\begin{lemm}(\textbf{$C^0$ estimate for $u^*$})
\label{c0-gcf-approx-lem}
Let $u^*$ be the solution of \eqref{gcf-approx}, then
\be\label{gfc3}
|u^*|<C,
\ee
where $C=C(|\varphi^*|_{C^0}).$
\end{lemm}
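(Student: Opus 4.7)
The plan is to bound $u^*$ from both sides by a maximum principle argument, with the lower bound being the substantive part. Since the equation forces $u^*$ to be convex and $\varphi^* = -\varphi < 0$ on $\p B_1$, the upper bound $u^* \leq \max_{\p B_1}\varphi^* < 0$ is immediate from the fact that a convex function on $\bar B_1$ attains its maximum on the boundary. The lower bound is delicate because the right-hand side of \eqref{gcf-approx} can blow up as $|\xi|\to 1$.

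For the lower bound I would construct an explicit convex subsolution modelled on the Legendre transform of the standard Minkowski hyperboloid. Set
\[
v(\xi) := -A\sqrt{1-|\xi|^2}-B, \qquad A,B>0.
\]
A direct computation yields $D^2 v = A(1-|\xi|^2)^{-1/2}\bigl[I+(1-|\xi|^2)^{-1}\xi\otimes\xi\bigr]$, so $v$ is strictly convex with $\det D^2 v = A^n(1-|\xi|^2)^{-(n+2)/2}$. Choosing $B := |\varphi^*|_{C^0}$ gives $v|_{\p B_1} = -B \leq \varphi^*$. To make $v$ a subsolution of \eqref{gcf-approx} I need
\[
A^n(1-|\xi|^2)^{-(n+2)/2} \geq \frac{(1-s|\xi|^2)^{(\al-n-2)/2}}{(A\sqrt{1-|\xi|^2}+B)^{\al}}.
\]
Because $\al\leq n$ makes the exponent $(\al-n-2)/2$ negative, the inequality $1-s|\xi|^2 \geq 1-|\xi|^2$ implies $(1-s|\xi|^2)^{(\al-n-2)/2} \leq (1-|\xi|^2)^{(\al-n-2)/2}$ uniformly in $s\in(0,1)$. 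Combining this with $(A\sqrt{1-|\xi|^2}+B)^\al \geq B^\al$ reduces the required inequality to $A^nB^\al \geq (1-|\xi|^2)^{\al/2}$, and since the right-hand side is at most $1$, the choice $A := B^{-\al/n}$ suffices; both constants depend only on $|\varphi^*|_{C^0}$.

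To conclude I would apply the standard Monge--Amp\`ere comparison, exploiting that the nonlinearity $h(u,\xi):=(1-s|\xi|^2)^{(\al-n-2)/2}/(-u)^\al$ is strictly increasing in $u$ on $(-\infty,0)$. If $v-u^*$ attained a positive interior maximum at some $\xi_0\in B_1$, the inequality $D^2v(\xi_0)\leq D^2u^*(\xi_0)$ between positive-definite matrices would give
\[
\det D^2v(\xi_0) \leq \det D^2u^*(\xi_0) = h(u^*(\xi_0),\xi_0) < h(v(\xi_0),\xi_0) \leq \det D^2v(\xi_0),
\]
a contradiction. Hence $v \leq u^*$ throughout $B_1$, and evaluating at $\xi=0$ produces $u^*(\xi) \geq -|\varphi^*|_{C^0} - |\varphi^*|_{C^0}^{-\al/n}$, which combined with the upper bound yields the desired estimate.

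The main obstacle is finding the right barrier: a naive quadratic comparison function has constant $\det D^2$ and cannot keep up with the singular factor $(1-s|\xi|^2)^{(\al-n-2)/2}$ as $|\xi|\to1$. The hyperboloid-shaped barrier above precisely matches the natural degeneration rate, and the pointwise domination of $(1-s|\xi|^2)^{(\al-n-2)/2}$ by its $s=1$ counterpart is exactly what keeps the constant $s$-independent — a feature that will be essential when passing to the limit $s\to 1$ later in Section \ref{gcf}.
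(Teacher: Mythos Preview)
Your argument is correct. The upper bound via convexity is the same as the paper's, and your explicit hyperboloid-type barrier $v(\xi)=-A\sqrt{1-|\xi|^2}-B$ does exactly what is needed for the lower bound; the computations and the monotonicity-in-$u$ comparison are all fine.

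The paper takes a slightly different route for the lower bound: instead of writing down an explicit barrier, it invokes the existence result of An-Min~Li~\cite{AML} to obtain a function $\underline{u}^*$ solving $\det D^2\underline{u}^* = K^{-1}(1-|\xi|^2)^{-(n+2)/2}$ in $B_1$ with $\underline{u}^*=\varphi^*$ on $\partial B_1$, and then chooses $K\leq C_0^\alpha$ (where $-C_0=\max_{\partial B_1}\varphi^*$) so that $\det D^2\underline{u}^*>\det D^2 u^*$. Your approach is more elementary and self-contained, since it avoids appealing to an external existence theorem. The paper's barrier has the minor advantage of matching the boundary data $\varphi^*$ exactly rather than merely lying below it, and the resulting two-sided bound $\underline{u}^*<u^{s*}<u_0^*$ is reused in setting up equation~\eqref{gcf1.1}; but for the $C^0$ estimate itself the two arguments are equivalent in strength.
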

\begin{proof}
Let $-C_0=\max\limits_{\xi\in\p B_1}\varphi^*<0,$ by the convexity of $u^*$ we know $-C_0>u^*$ in $B_1.$
On the other hand, \cite{AML} proves that there exists a solution $\lus$ satisfies
\be\label{gcf4}
\left\{
\begin{aligned}
\s_n(D^2\lus)&=\frac{1}{K}(1-|\xi|^2)^{-\frac{n+2}{2}}\,\,&\mbox{in $B_1$}\\
\lus&=\varphi^*(\xi)\,\,&\mbox{on $\p B_1,$}
\end{aligned}
\right.
\ee
for any $K\in\R_+.$

Now let $K\leq C_0^\al$ we have $\s_n(D^2\lus)>\s_n(D^2u^*)$ in $B_1,$
and $\lus=u^*$ on $\p B_1$. By the maximum principle we obtain
\be\label{gcf4*}-C_0>u^*>\lus.\ee
\end{proof}

Following \cite{CNS1}, we can obtain the $C^1$  and $C^2$ estimates for the solution of \eqref{gcf-approx}. Applying the method of continuity, we get the solvability of \eqref{gcf-approx}. Therefore, in the following, we will focus on establishing local estimates for $u^{s*}$.

\subsection{Local $C^1$ estimates}
This subsection contains two parts. In the first part, we will prove $(1-s|\xi|^2)|Du^{s*}(\xi)|<C,$ where $C$ is independent of $s$ and $\xi.$ This estimate will be useful for obtaining local $C^2$ estimates in the next subsection. In the second part, we will show $|Du^{s^*}|(\xi)\goto\infty$ as $s, |\xi|\goto 1.$ This is to illustrate that the Lendrengre transform of $u^{s*}$, denoted by $u^s,$ converges to an entire solution of \eqref{int0.1} as $s\goto 1$.

\subsubsection{Local $C^1$ upper bound} In this part we will show  $(1-s|\xi|^2)|Du^{s*}(\xi)|<C,$ where $C$ is a constant independent of $s.$

First, applying \cite{CNS1} we know that we can solve the following equation
\be\label{gcf1.1}
\left\{
\begin{aligned}
\s_n(D^2u^*)&=\frac{1}{C_1}<\frac{1}{(-\min\lus)^\al}\,\,&\mbox{in $B_1$}\\
u^*&=\varphi^*(\xi)\,\,&\mbox{on $\p B_1.$}
\end{aligned}
\right.
\ee
We will denote the solution to \eqref{gcf1.1} by $u_0^{*}.$ It's clear that $u_0^{*}>u^{s*}>\lus,$ where $\lus$ is the solution to \eqref{gcf4}.

Next, denote $h^s:=1-s|\xi|^2$ and $V^s=|Du^{s*}|,$ we prove
\begin{lemm}
\label{loc-c1-lem1}
For any $s\in [\frac{1}{2}, 1),$ if $M^s:=\max\limits_{\xi\in\bar{B}_1}h^sV^se^{-u^{s*2}}$ is achieved in $B_1,$ then
$M^s\leq C,$ where $C=C(|u^{s*}|_{C^0})$ is a constant independent of $s.$
\end{lemm}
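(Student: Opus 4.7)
The estimate follows entirely from the first-order critical point condition at the assumed interior maximizer---no second-derivative or maximum principle computation is needed. Dropping the superscript $s$, set $h = 1 - s|\xi|^2$, $V = |Du^*|$, $\phi = \log h + \log V - (u^*)^2 = \log M^s$, and $\eta = \langle \xi, Du^*\rangle$. Let $\xi_0 \in B_1$ be the interior maximizer of $\phi$ and assume $V(\xi_0) > 0$ (otherwise $M^s(\xi_0) = 0$ and the bound is trivial). Choose coordinates at $\xi_0$ diagonalizing $D^2 u^*$, with strictly positive eigenvalues $\lambda_1,\ldots,\lambda_n$. Differentiating $V^2 = \sum_k (u^*_k)^2$ and evaluating at $\xi_0$ yields $V V_i = \sum_k u^*_k u^*_{ki} = \lambda_i u^*_i$, while $\phi_i(\xi_0) = 0$ reads $V_i/V = 2u^* u^*_i + 2s\xi_i/h$. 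Combining these two identities gives
\[
\lambda_i u^*_i \;=\; 2 V^2 u^* u^*_i + \frac{2 s V^2 \xi_i}{h}, \qquad i = 1, \ldots, n.
\]

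Multiplying by $u^*_i$ and summing over $i$ produces the scalar relation
\[
\sum_{i=1}^n \lambda_i (u^*_i)^2 \;=\; 2 u^* V^4 + \frac{2 s V^2}{h}\,\eta.
\]
By strict convexity $\lambda_i > 0$, and $V > 0$ ensures that some $u^*_i \neq 0$, so the left-hand side is strictly positive. Since $u^* < 0$ by Lemma \ref{c0-gcf-approx-lem}, this forces $s\eta/h > |u^*| V^2 > 0$. Applying the Cauchy--Schwarz bound $\eta \leq |\xi_0| V \leq V$ then yields $hV < s/|u^*|$, and invoking the $C^0$ estimate $|u^*| \geq C_0 := \min_{\partial B_1}\varphi > 0$ together with $s < 1$ concludes
\[
M^s \;\leq\; h V \;\leq\; \frac{1}{C_0} \;=\; C\bigl(|u^{s*}|_{C^0}\bigr).
\]

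\textbf{Main point.} There is no serious technical obstacle; the subtlety lies in the design of the auxiliary function $M^s = h V e^{-(u^*)^2}$. The factor $h$ produces the term $2s\xi_i/h$ that eventually pairs with $\xi_i$, while $e^{-(u^*)^2}$ contributes the term $2u^* u^*_i$ needed so that, after multiplication by $u^*_i$, the sum $\sum_i \lambda_i (u^*_i)^2$ emerges---and its strict positivity, combined with $u^* < 0$, is exactly what bounds $hV$. It is worth emphasizing that neither the Monge--Amp\`ere equation itself nor the second-derivative condition $D^2 \phi \leq 0$ plays any role in this estimate; those tools are reserved for the $C^1$ lower bound in the next part and for the $C^2$ estimates of the following subsection.
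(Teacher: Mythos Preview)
Your proof is correct and follows essentially the same idea as the paper's: both use only the first-order critical condition together with strict convexity to arrive at the scalar inequality $s\langle\xi,Du^*\rangle/h > |u^*|V^2$, and then finish via $\eta\leq V$ and $|u^*|\geq C_0$. The only cosmetic difference is that the paper rotates so that $Du^*\parallel e_1$ and reads the bound directly from the single equation $\phi_1=0$ (using $u^*_{11}>0$), whereas you diagonalize $D^2u^*$ and contract all $n$ first-order equations against $Du^*$; the resulting relation is identical.
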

\begin{proof}
For our convenience, in this proof, we drop the superscript $s$ from $u^{s*},$ $h^s,$ and $V^s.$
Consider $\phi=hVe^{-u^{*2}}$ and assume $\phi$ achieves its maximum at an interior point $\xi_0\in B_1.$ We may rotate the coordinate such that at $\xi_0,$
we have $V=u^*_1$. Differentiating $\phi$ we get
\[\frac{2s\xi_i}{h}=\frac{u_1^*u^*_{1i}}{V}-2u^*u^*_i,\,\,1\leq i\leq n.\]
Therefore, when $i=1,$ we obtain
\[\frac{2s\xi_1}{h}=u^*_{11}-2u^*V.\]
By the convexity of $u^*$ we know that $u^*_{11}>0,$ which gives
\[\frac{2s\xi_1}{h}\geq 2|u^*|V.\]
This completes the proof of the Lemma \ref{loc-c1-lem1}.
\end{proof}
Finally, we want to show $h^sV^s$ is bounded on $\p B_1.$
\begin{lemm}
\label{loc-c1-lem2}
For any $s\in [\frac{1}{2}, 1),$ if $M^s:=\max\limits_{\xi\in\bar{B}_1}h^sV^se^{-u^{s*2}}$ is achieved in $\p B_1,$ then
$M^s\leq C,$ where $C>0$ is a constant independent of $s.$
\end{lemm}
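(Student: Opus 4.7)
On $\p B_1$ one has $h^s = 1-s$ and $u^{s*} = \varphi^*$, so $e^{-u^{s*2}}$ is bounded by a constant depending only on $|\varphi^*|_{C^0}$, and it suffices to prove $(1-s)V^s(\xi_0) \leq C$ at the maximizing boundary point $\xi_0 \in \p B_1$, uniformly in $s \in [1/2,1)$. Since $u^{s*} = \varphi^*$ on $\p B_1$, the tangential component of $Du^{s*}(\xi_0)$ along $\p B_1$ coincides with the spherical gradient of $\varphi^*$ at $\xi_0$, and is therefore bounded by $|\varphi^*|_{C^1}$. The matter thus reduces to controlling the outward normal derivative $\p_\nu u^{s*}(\xi_0)$ from above.

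The plan is to construct a lower barrier depending on $s$. Set $C_0 := -\max_{\p B_1}\varphi^* > 0$. Lemma \ref{c0-gcf-approx-lem} gives $-u^{s*} > C_0$ throughout $B_1$. For each fixed $s \in [1/2, 1)$, by \cite{CNS1} the Dirichlet problem
\be\label{plan-bar}
\s_n(D^2 \underline{v}^s) = \frac{(1-s|\xi|^2)^{(\al-n-2)/2}}{C_0^\al}\,\,\mbox{in $B_1$}, \qquad \underline{v}^s = \varphi^*\,\,\mbox{on $\p B_1$},
\ee
admits a strictly convex solution $\underline{v}^s \in C^{2,\gamma}(\bar B_1)$; the source is bounded since $s<1$. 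Because $(-u^{s*})^\al > C_0^\al$, one has $\s_n(D^2 \underline{v}^s) > \s_n(D^2 u^{s*})$ in $B_1$, and the Monge--Amp\`ere comparison principle (with matching boundary data) yields $\underline{v}^s \leq u^{s*}$ in $B_1$. Therefore $\p_\nu u^{s*}(\xi_0) \leq \p_\nu \underline{v}^s(\xi_0)$.

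The remaining, and main, obstacle is the uniform bound $(1-s)\p_\nu \underline{v}^s(\xi_0) \leq C$: the right-hand side of \eqref{plan-bar} blows up as $s \to 1^-$, so the usual boundary $C^1$ estimates for Monge--Amp\`ere are not uniform. My plan is a local radial comparison: for each $\xi_0 \in \p B_1$ attach an interior tangent ball $B_r(y) \subset B_1$ at $\xi_0$ and solve the radial problem with source $(1-s|\xi|^2)^{(\al-n-2)/2}/C_0^\al$ on $B_r(y)$ with a constant boundary value not exceeding $\min_{\p B_r(y)}\underline{v}^s$; by a further comparison, this radial function dominates $\underline{v}^s$ on $B_r(y)$ and agrees with it at $\xi_0$, so the normal derivative of $\underline{v}^s$ is bounded by that of the radial barrier. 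Writing the radial model as $f(|\xi-y|)$, the equation integrates to
\be\label{plan-int}
(f'(r))^n = \frac{n}{C_0^\al}\int_0^r \rho^{n-1}(1-s\rho^2)^{(\al-n-2)/2}d\rho,
\ee
and the substitution $\tau = 1-s\rho^2$ shows the right-hand side of \eqref{plan-int} is $\sim (1-s)^{(\al-n)/2}$ when $\al<n$, or $\sim \log(1-s)^{-1}$ when $\al = n$, as $s\to 1^-$. This gives $\p_\nu \underline{v}^s(\xi_0) \leq C(1-s)^{(\al-n)/(2n)}$ (with a logarithmic factor when $\al=n$), whence $(1-s)\p_\nu \underline{v}^s(\xi_0) \leq C(1-s)^{(n+\al)/(2n)} \to 0$; the positivity of the exponent $(n+\al)/(2n)$ comes from $0 < \al \leq n$, the very constraint explained in Remark \ref{gcf-remk1}. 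Combining with the tangential bound produces $(1-s)V^s(\xi_0) \leq C$ on $\p B_1$, which is the asserted estimate.
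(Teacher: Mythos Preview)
Your overall plan (build a subsolution with the correct boundary data, then read off the boundary gradient) is the right one, and your asymptotic computation $(1-s)\,V^s\lesssim(1-s)^{(n+\al)/(2n)}\to 0$ is fine. The gap is in the tangent-ball comparison. You want a radial function $w=f(|\xi-y|)$ on $B_r(y)$ with constant boundary value $\leq\min_{\partial B_r(y)}\underline v^s$ and with $w(\xi_0)=\underline v^s(\xi_0)$. These two requirements are incompatible: since $\underline v^s$ is convex with large outward normal derivative near $\partial B_1$, the interior portion of $\partial B_r(y)$ lies where $\underline v^s$ is \emph{strictly smaller} than $\underline v^s(\xi_0)=\varphi^*(\xi_0)$, so $\min_{\partial B_r(y)}\underline v^s<\underline v^s(\xi_0)$ and no constant boundary value can satisfy both constraints. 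A second inconsistency: the source $(1-s|\xi|^2)^{(\al-n-2)/2}$ is radial about the origin, not about $y$, so the integral formula \eqref{plan-int} you wrote is the $y=0$ formula; on an off-center ball the radial ODE does not see this source.

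The paper avoids both issues by a single explicit barrier that already carries the correct boundary data. It sets
\[
\ubar\psi \;=\; -k\,h^{1/2}+k(1-s)^{1/2}+u_0^*,\qquad h=1-s|\xi|^2,
\]
with $u_0^*$ the (uniformly elliptic) solution of \eqref{gcf1.1}. The radial piece $-k\,h^{1/2}+k(1-s)^{1/2}$ vanishes on $\partial B_1$, so $\ubar\psi=\varphi^*$ there; the Minkowski determinant inequality applied to the sum of two convex Hessians gives $\det D^2\ubar\psi\geq\det D^2(-kh^{1/2})=(ks)^n h^{-(n+2)/2}$, which dominates the source for $k=3/C_0^{\al/n}$. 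Hence $\ubar\psi\leq u^{s*}$ with equality on $\partial B_1$, and one computes $|D\ubar\psi|\leq ks\,h^{-1/2}+|Du_0^*|\leq C(1-s)^{-1/2}$ on $\partial B_1$. This is the same ``radial subsolution $+$ convex function with the right boundary values'' structure that fixes your argument; the point is that adding $u_0^*$ (rather than attempting a tangent-ball match) is what supplies the boundary values without spoiling the subsolution property. Your detour through the auxiliary $\underline v^s$ is then unnecessary.
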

\begin{proof}
For our convenience, in this proof, we drop the superscript $s$ from $u^{s*},$ $h^s,$ and $V^s.$
Let's $\ubar{\psi}=-kh^{1/2}+k(1-s)^{1/2}+u_0^*.$ For any $\xi\in B_1,$ WLOG, we may assume $\xi=(r, 0, \cdots, 0).$
A direct calculation yields at $\xi$ we have
\[\ubar{\psi}_{11}=ksh^{-3/2}+(u^*_0)_{11},\]
\[\ubar{\psi}_{ii}=ksh^{-1/2}+(u^*_0)_{ii}\,\,\mbox{for $i\geq 2$,}\]
and
\[\ubar{\psi}_{ij}=(u^*_0)_{ij}\,\,\mbox{for all other cases.}\]
Since $u^*_0$ is strictly convex we get
\[\s_n(D^2\ubar{\psi})>\s_n(D^2(-kh^{1/2}))=(ks)^nh^{-\frac{n+2}{2}}.\]
Choosing $k=\frac{3}{[\min(-\varphi^*)]^{\al/n}},$ we can see that $\ubar{\psi}$ is a subsolution of \eqref{gcf-approx}. Thus, on $\p B_1$ we have
\be\label{gcf2*}
|Du^*|<|D\ubar{\psi}|<\frac{C}{\sqrt{1-s}},
\ee
where $C>0$ is a constant independent of $s.$ It is easy to see that \eqref{gcf2*} implies the Lemma.
\end{proof}

Combining Lemma \ref{loc-c1-lem1} and Lemma \ref{loc-c1-lem2} we conclude
\begin{lemm}
\label{lem-5.6}
Let $u^{s*}$ be the solution of \eqref{gcf-approx} for $s\in[\frac{1}{2}, 1).$ Then there exists a constant $C>0,$
such that $|Du^{s*}(\xi)|(1-s|\xi|^2)\leq C.$ Here, $C$ is a constant independent of $s.$
\end{lemm}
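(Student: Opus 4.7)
The statement is a clean combination of the two preceding lemmas, so the proof plan is essentially to package them together using the uniform $C^0$ bound from Lemma \ref{c0-gcf-approx-lem}.

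First I would introduce the auxiliary quantity $\phi(\xi) := h^s(\xi) V^s(\xi) e^{-(u^{s*}(\xi))^2}$ on $\bar{B}_1$. Since $u^{s*}\in C^2(\bar{B}_1)$ and $V^s=|Du^{s*}|$ is continuous up to the boundary (by the a priori global $C^1$ estimate used to solve \eqref{gcf-approx}), $\phi$ attains its maximum $M^s$ at some point $\xi_0\in\bar{B}_1$. The argument then splits into the dichotomy $\xi_0\in B_1$ versus $\xi_0\in\partial B_1$, each already handled.

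If $\xi_0\in B_1$, Lemma \ref{loc-c1-lem1} gives $M^s\leq C(|u^{s*}|_{C^0})$. If $\xi_0\in\partial B_1$, then at $\xi_0$ we have $h^s=1-s$, so $\phi(\xi_0)=(1-s)V^s(\xi_0)e^{-(\varphi^*)^2}$; Lemma \ref{loc-c1-lem2}, obtained by the barrier $\ubar{\psi}=-k h^{1/2}+k(1-s)^{1/2}+u_0^*$, shows $V^s\leq C/\sqrt{1-s}$ on $\partial B_1$, whence $M^s\leq C\sqrt{1-s}\leq C$. In either case $M^s$ is bounded independently of $s$.

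Finally, Lemma \ref{c0-gcf-approx-lem} gives a uniform bound $|u^{s*}|\leq C_0$ on $B_1$ independent of $s$, so $e^{-(u^{s*})^2}\geq e^{-C_0^2}$. For any $\xi\in\bar{B}_1$,
\[
(1-s|\xi|^2)|Du^{s*}(\xi)| = h^s(\xi)V^s(\xi) \leq e^{C_0^2}\,\phi(\xi)\leq e^{C_0^2}M^s\leq C,
\]
which is the desired uniform estimate. The potential obstacle — really the only nontrivial point — is making sure the barrier construction in Lemma \ref{loc-c1-lem2} provides a bound depending only on $|u_0^*|_{C^0}$, $\min(-\varphi^*)$, and $|\varphi^*|_{C^2(\partial B_1)}$ (all of which are $s$-independent); this has been verified in that lemma, so no new analytical work is needed here.
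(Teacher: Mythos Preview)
Your proposal is correct and follows exactly the approach the paper indicates: the paper simply states that Lemma \ref{lem-5.6} is obtained by ``Combining Lemma \ref{loc-c1-lem1} and Lemma \ref{loc-c1-lem2},'' and you have spelled out precisely that combination, using the uniform $C^0$ bound from Lemma \ref{c0-gcf-approx-lem} to strip off the factor $e^{-(u^{s*})^2}$.
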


\subsubsection{Local $C^1$ lower bound near $\p B_1$} In this part, we will show $|Du^{s^*}|(\xi)\goto\infty$ as $s, |\xi|\goto 1.$

In order to obtain local $C^1$ lower bounds, we will construct a supersolution $\bar{u}^{s*}$ to
\be\label{lce0.1}
\s_n(D^2u^*)=\frac{(1-s|\xi|^2)^{(\al-n-2)/2}}{(-u^*)^\al}\,\,\mbox{in $B_1$}\ee for $\al\leq n,$
which satisfies
\be\label{lce0.2}
|D\bar{u}^{s*}(\xi)|\goto\infty \,\,\mbox{as $s\goto 1$ and $|\xi|\goto 1.$}
\ee
In the following, we will restrict ourselves to the case when $s\in[1/2, 1)$.
Denote $h^s=1-s|\xi|^2,$ then $h^s_i=-2s\xi_i,$ and $h^s_{ij}=-2s\delta_{ij}.$ Consider $g_1(h^s)=-h^s\log|\log h^s|.$ By a straightforward calculation we get
\be\label{gcf5}
g_1'=-\log|\log h^s|+\frac{1}{|\log h^s|},
\ee
and
\be\label{gcf6}
g_1''=\lt(1+\frac{1}{|\log h^s|}\rt)\frac{1}{h^s|\log h^s|}.
\ee
Therefore, at any point $\xi\in B_1$ with $|\xi|=r$ we have
\be\label{gcf7}
\begin{aligned}
\det(D^2g_1)&=s^n\lt(2\log|\log h^s|-\frac{2}{|\log h^s|}\rt)^{n-1}\\
&\times\lt[\lt(2\log|\log h^s|-\frac{2}{|\log h^s|}\rt)+4sr^2\lt(1+\frac{1}{|\log h^s|}\rt)\frac{1}{h^s|\log h^s|}\rt].\\
\end{aligned}
\ee
When $h^s<\delta_0$ for some fixed $\delta_0>0$ small, we have $\det(D^2g_1)\leq \frac{C}{h^s}$ for some constant $C>0.$ Here, $C=C(\delta_0)$ is independent of $s.$
On the other hand, when $h^s\geq\delta_0,$ it's easy to see that
$g_2=\frac{|\xi|^2}{2}-\frac{1-\delta_0}{2s}-\delta_0\log|\log\delta_0|$ satisfying
\be\label{gfc8}
\left\{
\begin{aligned}
\det(D^2g_2)&=1\,\,&\mbox{in $B_{\sqrt{\frac{1-\delta_0}{s}}}$}\\
g_2&=-\delta_0\log|\log\delta_0|\,\,&\mbox{on $\p B_{\sqrt{\frac{1-\delta_0}{s}}}$ }.
\end{aligned}
\right.
\ee
Define
\[
g=\left\{\begin{aligned}&g_1\,\,\mbox{ for $h^s<\delta_0,$}\\
&g_2\,\,\mbox{for $\delta_0\leq h^s\leq 1,$}
\end{aligned}\right.
\]
then $g$ is a continuous and convex function in $B_1.$ By standard smoothing procedure, we can find a convex, rotationally symmetric function $\Phi\in C^2(B_1)$ such that
\[
\Phi (g)=\left\{\begin{aligned}
&g_1\,\,&\mbox{for $h^s<\frac{\delta_0}{2}$,}\\
&g_2\,\, &\mbox{for $2\delta_0\leq h^s\leq1$.}
\end{aligned}
\right.
\]
We can see that for some suitable choice of $\rho>0,$ $\rho\Phi$ is a supersolution of \eqref{lce0.1} that satisfies \eqref{lce0.2}. Here, $\rho>0$ only depends on $|u^{s*}|_{C^0}.$ Below we will denote this supersolution by $\bar{u}^{s*}.$

Following \cite{AML} we can prove following Lemmas
\begin{lemm}
\label{c1-lem}
Let $u^{s*}$ be the solution of \eqref{gcf-approx}, then
$|\p u^{s*}|$ is bounded above by a constant $C_1=C_1(|\varphi^*|_{C^1}).$
\end{lemm}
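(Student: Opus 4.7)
My plan is to establish the global $C^1$ bound via a two-step strategy: a boundary gradient estimate built from barriers that match $\varphi^*$ on $\p B_1$, followed by an interior reduction to the boundary via a maximum principle applied to an auxiliary function in the spirit of \cite{AML}.

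For the boundary estimate, since $u^{s*}\big|_{\p B_1}=\varphi^*$, the tangential components of $Du^{s*}$ at any $\xi_0\in\p B_1$ equal the corresponding tangential derivatives of $\varphi^*$, hence are controlled by $|\varphi^*|_{C^1}$. For the outward normal component $\p_\nu u^{s*}(\xi_0)$, I would sandwich $u^{s*}$ between two barriers in $B_1$ that both equal $\varphi^*$ on $\p B_1$: the subsolution $\ubar{\psi}$ constructed in the proof of Lemma \ref{loc-c1-lem2} and the supersolution $u_0^*$ from \eqref{gcf1.1}. Since $u_0^*$ is a supersolution and $\ubar{\psi}$ a subsolution of the same Dirichlet problem, the comparison principle gives $\ubar{\psi}\leq u^{s*}\leq u_0^*$ in $B_1$, and the common boundary values allow us to differentiate normally to get $\p_\nu u_0^*(\xi_0)\leq \p_\nu u^{s*}(\xi_0)\leq \p_\nu\ubar{\psi}(\xi_0)$. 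The bound on $Du_0^*$ in terms of $|\varphi^*|_{C^1}$ is exactly the CNS boundary gradient estimate from \cite{CNS1}, while $|D\ubar{\psi}|_{\p B_1}$ is finite for each fixed $s\in[1/2,1)$.

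For the interior step, I would apply the maximum principle to an auxiliary quantity such as $W=\tfrac12|Du^{s*}|^2 e^{-Au^{s*}}$ with a large constant $A>0$. If $W$ attains its maximum at an interior point $\xi_*$, the first-order condition lets us rewrite $D(|Du^{s*}|^2)$ in terms of $Du^{s*}$, and applying the linearized Monge-Amp\`ere operator $U^{ij}\p_{ij}$ (with $(U^{ij})$ the cofactor matrix of $D^2 u^{s*}$) together with \eqref{gcf-approx} yields an algebraic inequality at $\xi_*$. Choosing $A$ sufficiently large in terms of $|u^{s*}|_{C^0}$ (which is bounded by $|\varphi^*|_{C^0}$ through Lemma \ref{c0-gcf-approx-lem}), the negative term $-AU^{ij}u^{s*}_{ij}=-An\s_n^{1/n}\cdots$ (or its log-analog) dominates and forces $|Du^{s*}(\xi_*)|$ to be controlled by $|u^{s*}|_{C^0}$, reducing the global estimate to the boundary bound established above.

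The principal obstacle is the upper bound on $\p_\nu u^{s*}$ at the boundary: the subsolution $\ubar{\psi}=-kh^{1/2}+k(1-s)^{1/2}+u_0^*$ only gives $|D\ubar{\psi}|_{\p B_1}$ of order $(1-s)^{-1/2}+|Du_0^*|$, so the constant produced by this barrier is intrinsically $s$-dependent through our construction. However, for each fixed $s\in[1/2,1)$ it is finite and depends ultimately only on $|\varphi^*|_{C^1}$ (and implicitly on $|\varphi^*|_{C^0}$), which is enough to close the method-of-continuity argument solving \eqref{gcf-approx}; the uniform behavior as $s\to 1$ is the separate content of Lemma \ref{lem-5.6}. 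What remains is the standard but computation-heavy Pogorelov-type maximum principle calculation for $W$, which I view as routine.
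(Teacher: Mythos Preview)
Your proposal misreads the statement. The symbol $\p$ in $|\p u^{s*}|$ does \emph{not} denote the full Euclidean gradient $Du^{s*}$; it denotes an angular (rotational) derivative, i.e.\ differentiation along a Killing field of $SO(n)$ such as $\xi_j\p_{\xi_k}-\xi_k\p_{\xi_j}$, in keeping with the notation of \cite{AML}. A uniform-in-$s$ bound on the full gradient is in fact \emph{false}: Lemma \ref{lem-5.6*} shows $|Du^{s*}(\xi)|\to\infty$ as $s,|\xi|\to 1$, and the whole point of the subsequent construction is precisely that the radial derivative blows up. So the barrier argument you outline, which yields an $s$-dependent bound of order $(1-s)^{-1/2}$ on $|Du^{s*}|$, is not proving Lemma \ref{c1-lem}; it is essentially re-proving Lemma \ref{loc-c1-lem2}, and your own remark that the constant is ``intrinsically $s$-dependent'' already signals that it cannot match the claim $C_1=C_1(|\varphi^*|_{C^1})$.

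Once $\p$ is read correctly, the proof is a one-line maximum principle with no need for barriers or a Pogorelov-type auxiliary function. The right-hand side of \eqref{gcf-approx} is radially symmetric apart from the factor $(-u^*)^{-\al}$, so applying the rotational derivative $\p$ to $\log\det D^2u^{s*}=\log f$ kills the $\log(1-s|\xi|^2)$ term entirely and yields
\[
u^{*ij}(\p u^*)_{ij}=\al\,\frac{\p u^*}{-u^*}.
\]
If $\p u^*$ had a positive interior maximum the left side would be $\le 0$ while the right side would be $>0$; similarly for a negative interior minimum. Hence $|\p u^{s*}|\le\max_{\p B_1}|\p\varphi^*|$, with no dependence on $s$. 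This $s$-independent tangential bound (together with the analogous second-order tangential bound in Lemma \ref{c2-upper-lem}) is exactly what is needed later in Lemma \ref{local-upper-barrier-lem}, where one works on spheres $\mathbb{S}^{n-1}(r)$ and must control $u^{s*}$ along great circles uniformly in $s$ and $r$.
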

\begin{proof}
For our convenience, in this proof, we drop the superscript $s$ from $u^{s*}.$
We take the logarithms of both sides of \eqref{gcf-approx} and differentiate it with respect to $\xi_k,$ then find
\[u^{*ij}u^*_{kij}=\frac{\p}{\p\xi_k}\lt[\log(1-s|\xi|^2)\cdot\frac{\al-n-2}{2}\rt]-\al\frac{\p\log(-u^*)}{\p\xi_k}.\]
This implies
\be
\label{gcf9}
\sum u^{*ij}(\p u^*)_{ij}=-\al\frac{\p(-u^*)}{-u^*}=\al\frac{\p u^*}{-u^*}.
\ee
If $\p u^*$ achieves interior positive maximum, we would have $0\geq \al\frac{\p u^*}{-u^*}>0.$ This leads to a contradiction.
Similarly, if $\p u^*$ achieves interior negative minimum, we would have $0\leq \al\frac{\p u^*}{-u^*}<0.$ This also leads to a contradiction.
Therefore we conclude
\[|\p u^*|\leq\max\limits_{\p B_1}|\p\varphi^*|.\]
\end{proof}

\begin{lemm}
\label{c2-upper-lem}
Let $u^{s*}$ be the solution of \eqref{gcf-approx}, then
$\p^2 u^{s*}$ is bounded above by a constant $C_2=C_2(|\varphi^*|_{C^2}).$
\end{lemm}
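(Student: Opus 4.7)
The plan is to run the maximum principle at the $C^2$ level, in the spirit of Lemma~\ref{c1-lem}. Fix a unit direction $e$, set $v:=u^{s*}_{ee}$, and (suppressing the superscript $s$) write the equation in logarithmic form
\[
F:=\log\s_n(D^2u^*)=\tfrac{\al-n-2}{2}\log(1-s|\xi|^2)-\al\log(-u^*).
\]
The first step is to differentiate $F$ twice in the direction $e$. Using $\s_n(D^2u^*)=\det D^2u^*$ together with the concavity of $\log\det$ on the cone of positive definite symmetric matrices, one obtains
\[
u^{*ij}(u^*_{ee})_{ij}\geq F_{ee},
\]
the discrepancy being a non-negative quadratic form in the third derivatives of $u^*$.

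The second step is to extract a good lower bound on $F_{ee}$. Since $\al\leq n$, the prefactor $\tfrac{\al-n-2}{2}$ is strictly negative and $\p_{ee}\log(1-s|\xi|^2)<0$, so the first summand contributes positively. The second summand gives
\[
\p_{ee}\bigl(-\al\log(-u^*)\bigr)=\al\frac{u^*_{ee}}{-u^*}+\al\left(\frac{u^*_e}{u^*}\right)^2\geq \al\frac{u^*_{ee}}{-u^*},
\]
using $u^*<0$ and the strict convexity of $u^*$. Combined with the uniform $C^0$ bound of Lemma~\ref{c0-gcf-approx-lem}, this yields
\[
u^{*ij}(u^*_{ee})_{ij}\geq c_0\, u^*_{ee}
\]
for some $c_0>0$ independent of $s$, whenever $u^*_{ee}>0$.

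A positive interior maximum of $v$ is then ruled out: at such a point $(u^*_{ee})_{ij}$ would be negative semidefinite, forcing the left-hand side to be non-positive and contradicting the strict positive lower bound. Hence $\max_{\bar B_1}u^*_{ee}$ is attained on $\p B_1$. The main obstacle is the remaining $s$-independent $C^2$ estimate on $\p B_1$: for tangential-tangential directions it is immediate from $u^*=\varphi^*$, while for mixed and normal-normal directions one invokes the standard boundary machinery of \cite{CNS1, AML}, combined with the $C^1$ bound of Lemma~\ref{c1-lem} and the barriers $\lus$, $u_0^{*}$ and $\bar u^{s*}$ already constructed, to produce the desired $s$-independent upper estimate $\p^2 u^{s*}\leq C_2(|\varphi^*|_{C^2})$.
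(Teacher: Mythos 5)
Your interior argument (concavity of $\log\det$, uniform $C^0$ bound) is in the right spirit, but the proposal rests on a misreading of what $\p$ means. In the paper, $\p$ is not an arbitrary coordinate direction $e$; following \cite{AML}, it is a rotation vector field $\p=\xi_i\p_j-\xi_j\p_i$ (tangential to the spheres $|\xi|=\mathrm{const}$). Two things hinge on this.

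First, in the interior: such a $\p$ annihilates $\log(1-s|\xi|^2)$ entirely, so after differentiating $\log\s_n(D^2u^*)=\tfrac{\al-n-2}{2}\log(1-s|\xi|^2)-\al\log(-u^*)$ twice along $\p$, the only source term is $\al\p^2 u^*/(-u^*)$ and one gets the clean inequality $u^{*ij}(\p^2 u^*)_{ij}\geq \al\,\p^2 u^*/(-u^*)$ with no $s$-dependence. In your version the factor $\tfrac{\al-n-2}{2}\p_{ee}\log(1-s|\xi|^2)$ does not vanish; you correctly note it has a good sign, but it is precisely this unbounded positive source that forces $D^2u^*$ to blow up near $\p B_1$ as $s\goto 1$, so the sign alone is not a favor, and no bound on $u^*_{ee}$ independent of $s$ can come out of it.

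Second, and fatally, at the boundary: for a rotational $\p$, $\p^2 u^*|_{\p B_1}=\p^2\varphi^*$ and the estimate is immediate with constant $C_2(|\varphi^*|_{C^2})$ independent of $s$. For a generic direction $e$ the claimed conclusion is actually false with an $s$-independent constant. Indeed, on $\p B_1$ the equation reads $\det D^2u^*=(1-s)^{(\al-n-2)/2}/(-\varphi^*)^\al\to\infty$ as $s\to1$ (since $\al\leq n<n+2$), while the tangential-tangential second derivatives are controlled by $\varphi^*$; hence the normal-normal second derivative necessarily blows up. So ``invoking the standard boundary machinery of \cite{CNS1,AML}'' cannot produce a uniform-in-$s$ bound for normal-normal directions --- the statement you are trying to prove in that generality is simply not the one the lemma asserts. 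The lemma, correctly read, concerns only the angular second derivatives, and its proof requires no boundary machinery beyond the trivial identity $\p^2 u^*=\p^2\varphi^*$ on $\p B_1$.
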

\begin{proof}
For our convenience, in this proof, we drop the superscript $s$ from $u^{s*}.$
We have shown
\[\sum u^{*ij}(\p u^*)_{ij}=-\al\frac{\p(-u^*)}{-u^*}=\al\frac{\p u^*}{-u^*}.\]
Differentiating this equation once again we obtain
\be\label{gcf10}
\sum u^{*ij}\p[(\p u^*)_{ij}]+\p(u^{*ij})(\p u^*)_{ij}
=\frac{\al}{(- u^*)^2}(\p u^*)^2+\frac{\al}{(-u^*)}\p^2 u^*.
\ee
Following the argument of Lemma 5.2 in \cite{AML}, we get
\be\label{gcf11}
\sum u^{*ij}[(\p^2 u^*)_{ij}]\geq\frac{\al}{(-u^*)}(\p^2 u^*).
\ee
Therefore, $\p^2 u^*$ does not achieve positive maximum at interior points and we conclude
\[\p^2 u^*\leq\max\limits_{\p B_1}|\p^2\varphi^*|.\]
\end{proof}

\begin{lemm}
\label{local-upper-barrier-lem}
Let $s\in [1/2, 1),$ $\sqrt{\frac{2-\delta_0}{2s}}<r<1,$ and $\dS^{n-1}=\{\xi\in\R^n\mid\sum\xi_i^2=r^2\}.$ For any any point
$\hat{\xi}\in\dS^{n-1}(r)$ there exists a function
\[\uus_s=\bar{u}^{s*}+b_1\xi_1+\cdots+b_n\xi_n+d\]
such that $\uus_s(\hat{\xi})=u^{s*}(\hat{\xi})$ and $\uus_s(\xi)>u^{s*}(\xi)$ for any $\xi\in\dS^{n-1}\setminus\{\hat{\xi}\}.$ Here, $u^{s*}$ is a solution of \eqref{gcf-approx}, $\bar{u}^{s*}$ is the supersolution constructed before,
$b_1, \cdots, b_n$ are constants depending on $\hat{\xi},$ and $d$ is a positive constant independent of $\hat{\xi}$ and $r.$
\end{lemm}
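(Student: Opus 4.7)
Since $\bar u^{s*}$ is rotationally symmetric, on the sphere $\mathbb{S}^{n-1}(r)$ we have $\bar u^{s*}\equiv \bar u^{s*}(r)$, so the restriction of $\uus_s$ to this sphere is an affine function plus a constant. The plan is therefore to find an affine majorant of $u^{s*}|_{\mathbb{S}^{n-1}(r)}$ that touches $u^{s*}$ exactly at $\hat\xi$, while prescribing the constant term up front so that $d$ can be taken independent of $\hat\xi$ and $r$.

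First I will rotate coordinates so that $\hat\xi = (r,0,\ldots,0)$; this is permissible because $\bar u^{s*}$ is rotation-invariant and the bounds of Lemmas~\ref{c0-gcf-approx-lem}, \ref{c1-lem}, and \ref{c2-upper-lem} depend only on the rotation-invariant norm $|\varphi^*|_{C^2}$. I then fix a universal positive constant $d = D$ (to be chosen depending only on $|\varphi^*|_{C^2}$, $|u^{s*}|_{C^0}$, $|\bar u^{s*}|_{C^0}$, and the constants $C_1, C_2$), set $b_i := u^{s*}_i(\hat\xi)$ for $i = 2,\ldots,n$ (so that the tangential first-order variation cancels), and solve for $b_1$ from the touching condition $\uus_s(\hat\xi) = u^{s*}(\hat\xi)$, which gives $b_1 = (u^{s*}(\hat\xi) - \bar u^{s*}(r) - D)/r$. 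For $D$ sufficiently large, $b_1 < 0$ and $|b_1|\,r \geq D - 2|u^{s*}|_{C^0}$ can be made as large as needed.

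To verify $\uus_s > u^{s*}$ on $\mathbb{S}^{n-1}(r)\setminus\{\hat\xi\}$, I will parametrize sphere points by unit-speed great-circle geodesics $\gamma$ from $\hat\xi$ with angular distance $\theta \in (0, \pi]$, and set $g(s) := u^{s*}(\gamma(s))$. A direct computation, exploiting the rotational symmetry of $\bar u^{s*}$ and the choice of $b_i$, yields
\[
\uus_s(\gamma(r\theta)) - u^{s*}(\gamma(r\theta)) = |b_1|\,r(1 - \cos\theta) + r\sin\theta\cdot g'(0) - [g(r\theta) - g(0)].
\]
The key analytic input is a one-sided upper bound $g''(s) \leq C'$, which I get by combining Lemma~\ref{c2-upper-lem} (bounding the tangential Hessian of $u^{s*}$ from above) with the convexity of $u^{s*}$ together with $|u^{s*}| \leq C_0$ (controlling the outward radial derivative $u^{s*}_\nu$ from below by $-2C_0/r$). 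This yields the semi-concavity estimate $g(r\theta) - g(0) \leq g'(0)\,r\theta + C'(r\theta)^2/2$; substituting and using $1 - \cos\theta \geq 2\theta^2/\pi^2$ and $|\theta - \sin\theta| \leq \theta^3/6$ on $[0,\pi]$ reduces the desired positivity to a quadratic estimate in $\theta$ dominated by $|b_1|\,r\,\theta^2$, strictly positive for all $\theta \in (0,\pi]$ once $D$ is chosen large enough.

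The principal obstacle is that only a one-sided (upper) bound on $g''$ is available: $g''$ involves the outward radial derivative $u^{s*}_\nu$, which is expected to blow up as $\hat\xi$ approaches $\partial B_1$, so $g''$ itself may be arbitrarily negative. Fortunately, a lower bound on $\uus_s - u^{s*}$ corresponds, via Taylor's theorem, to an upper bound on $g$, and the one-sided semi-concavity estimate is exactly what is needed for that. A secondary subtlety is that $d$ must be independent of $\hat\xi$ and $r$ (and, for uniformity of subsequent estimates, of $s$); this is handled by prescribing $d = D$ as a universal constant at the outset and letting $b_1$ absorb all parameter dependence through the touching condition.
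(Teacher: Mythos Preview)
Your approach is essentially the paper's: rotate so that $\hat\xi=(r,0,\dots,0)$, set $b_i=u^{s*}_i(\hat\xi)$ for $i\geq 2$, determine $b_1$ from the touching condition, and compare $\uus_s$ with $u^{s*}$ along great circles through $\hat\xi$ using the rotational-derivative bounds of Lemmas~\ref{c1-lem} and~\ref{c2-upper-lem}. The only organizational difference is that the paper splits the circle into a near arc $|t|\leq\pi/4$ (where it bounds $F''$ from below) and a far arc $|t|\geq\pi/4$ (where it estimates $F$ directly), whereas you run a single semi-concavity Taylor estimate for all $\theta\in(0,\pi]$. Both routes work and yield a $d$ depending only on the uniform constants $C_1,C_2$ and the $C^0$ bounds.

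One point in your justification of $g''\leq C'$ needs correction. Lemma~\ref{c2-upper-lem} bounds the second \emph{rotational} derivative $\partial^2 u^{s*}$ with $\partial=\xi_i\partial_j-\xi_j\partial_i$; along the great circle $c(t)=r\cos t\,e_1+r\sin t\,v$ this is exactly $\tfrac{d^2}{dt^2}u^{s*}(c(t))=r^2 g''(rt)$, so $g''\leq C_2/r^2$ follows \emph{directly} from the lemma. It does \emph{not} bound the Euclidean tangential Hessian $\tau^T D^2u^{s*}\,\tau$ separately: indeed $\partial^2 u^{s*}=r^2\,\tau^T D^2u^{s*}\,\tau - r\,u^{s*}_\nu$, and since $u^{s*}_\nu\to+\infty$ near $\partial B_1$ the tangential Hessian is unbounded. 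Thus your auxiliary argument (convexity plus $|u^{s*}|\leq C_0$ to bound $u^{s*}_\nu$ from below) is unnecessary, and the decomposition you describe would not work as written; simply cite Lemma~\ref{c2-upper-lem} for $g''\leq C_2/r^2$ and proceed.
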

\begin{proof}
By rotating the coordinate we may assume $\hat{\xi}=(r, 0,\cdots, 0).$ We choose $b_k=\frac{\p u^{s*}}{\p\xi_k}(r, 0, \cdots, 0),$ $k=2, 3, \cdots, n,$ and
choose $b_1$ such that $u^{s*}(r, 0, \cdots, 0)=\bar{u}^{s*}(r, 0, \cdots, 0)+b_1r+d.$ To choose $d$ we consider an arbitrary great circle $c(t)$ on $\dS^{n-1}(r)$ passing through $\hat{\xi},$
for example the circle
\[\xi_1=r\cos t,\,\, \xi_2=r\sin t, \,\, -\pi\leq t\leq\pi,\,\,\xi_3=\xi_4=\cdots=\xi_n=0.\]
Let
\[
\begin{aligned}
F(t)=(\uus_s-u^{s*})|_{c(t)}&=\bar{u}^{s*}|_{c(t)}+b_1\xi_1+\cdots+b_n\xi_n+d-u^{s*}|_{c(t)}\\
&=\bar{u}^{s*}|_{c(t)}+b_1r\cos t+b_2r\sin t+d-u^{s*}|_{c(t)}.
\end{aligned}
\]
Note that by our construction of $\bar{u}^{s*},$ when $\frac{2-\delta_0}{2s}<r<1$ we have
\[\bar{u}^{s*}|_{c(t)}=-\rho h^s\log|\log h^s||_{\{|\xi|=r\}}:=\bar{u}^{s*}(r).\]
Therefore, we get
\[F(t)=\bar{u}^{s*}(r)+[u^{s*}(r, 0,\cdots, 0)-\bar{u}^{s*}(r)-d]\cos t+b_2r\sin t+d-u^{s*}(t).\]
It's clear that $F(0)=0$ and $\frac{dF}{dt}(0)=0.$ We will look at the second derivative of $F$.
Since \[\frac{d^2 F(t)}{dt^2}=[d+\bar{u}^{s*}(r)-u^{s*}(r, 0, \cdots, 0)]\cos t-b_2r\sin t-\frac{d^2u^{s*}}{dt^2},\]
when $-\frac{\pi}{4}\leq t\leq \frac{\pi}{4}$ we choose $d>u^{s*}(r, 0, \cdots, 0)-\bar{u}^{s*}(r)$ then we get
\[
\begin{aligned}
\frac{d^2F(t)}{dt^2}&\geq\frac{1}{\sqrt{2}}[d+\bar{u}^{s*}(r)-u^{s*}(r, 0, \cdots, 0)]-\lt|\frac{du^{s*}}{dt}(0)\rt|-\frac{d^2u^{s*}}{dt^2}\\
&\geq \frac{1}{\sqrt{2}}[d+\bar{u}^{s*}(r)-u^{s*}(r, 0, \cdots, 0)]-C_3
\end{aligned}
\]
for some $C_3>0$ determined by Lemma \ref{c1-lem} and \ref{c2-upper-lem}.
When $t\in[-\pi, -\frac{\pi}{4}]\cup[\frac{\pi}{4}, \pi]$ we have
\[
\begin{aligned}
F(t)&=d(1-\cos t)+[u^{s*}(r,0\cdots, 0)-\bar{u}^{s*}(r)]\cos t+b_2r\sin t-u^{s*}(t)+\bar{u}^{s*}(r)\\
&\geq d\lt(1-\frac{\sqrt{2}}{2}\rt)-C_4
\end{aligned}
\]
By choosing $d>0$ sufficiently large we prove this lemma.
\end{proof}

Finally, we can prove
\begin{lemm}
\label{lem-5.6*}
Let $u^{s*}$ be the solution of \eqref{gcf-approx} for $s\in[1/2, 1)$. Then there exists $C=C(|\varphi^*|_{C^2})>0,$ such that when
$\sqrt{\frac{2-\delta_0}{2s}}\leq\sqrt{\frac{2-\delta_1}{2s}}<|\xi|<1,$ we have
\[\frac{|Du^{s*}(\xi)|}{\log|\log h^s|}\geq C.\]
Here, $\delta_1>0$ is a small constant.
\end{lemm}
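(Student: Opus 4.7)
The plan is to promote the sphere comparison provided by Lemma~\ref{local-upper-barrier-lem} to a full ball comparison on $\overline{B_r}$, and then to read off the desired radial derivative lower bound from the one-sided touching at $\hat\xi$. Given $\xi$ with $\sqrt{(2-\delta_1)/(2s)}<|\xi|=r<1$, I rotate coordinates so that $\xi=\hat\xi=(r,0,\ldots,0)$ and apply Lemma~\ref{local-upper-barrier-lem} to obtain the upper barrier $\uus_s=\bar u^{s*}+b_1\xi_1+\cdots+b_n\xi_n+d$ with $\uus_s\ge u^{s*}$ on $\dS^{n-1}(r)$ and $\uus_s(\hat\xi)=u^{s*}(\hat\xi)$. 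Because adding a linear term does not affect the Hessian, $\uus_s$ is convex with $\det D^2\uus_s=\det D^2\bar u^{s*}$, and combining this with the global upper barrier $\bar u^{s*}\ge u^{s*}$ and the uniform $C^0$/$C^1$ bounds of Lemmas~\ref{c0-gcf-approx-lem} and \ref{c1-lem}, I would apply the Monge-Amp\`ere maximum principle (using monotonicity of the nonlinearity in $u$) to extend $\uus_s\ge u^{s*}$ to all of $\overline{B_r}$.

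With this full comparison in hand and equality at the boundary point $\hat\xi\in\p B_r$, the function $\uus_s-u^{s*}$ attains its minimum at $\hat\xi$, so pushing inward along $-e_1$ forces $\p_{\xi_1}u^{s*}(\hat\xi)\ge \p_{\xi_1}\uus_s(\hat\xi)=\p_{\xi_1}\bar u^{s*}(\hat\xi)+b_1$. In the range $|\xi|^2>(2-\delta_1)/(2s)$ we have $h^s<\delta_1/2<\delta_0/2$, so $\bar u^{s*}=-\rho h^s\log|\log h^s|$, and differentiating via \eqref{gcf5} gives
\[
\p_{\xi_1}\bar u^{s*}(\hat\xi)=-2\rho sr\,g_1'(h^s)=2\rho sr\,\log|\log h^s|+O(1).
\]
Since $b_1=(u^{s*}(\hat\xi)-\bar u^{s*}(\hat\xi)-d)/r$ is uniformly bounded (and $r$ is bounded below in the admissible range for $s\in[1/2,1)$), the leading $2\rho sr\log|\log h^s|$ term survives, yielding $|Du^{s*}(\hat\xi)|\ge C\log|\log h^s|$ as claimed.

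The main obstacle is the first step, namely the upgrade from the sphere comparison to the ball comparison. A naive invocation of the Monge-Amp\`ere maximum principle asks $\uus_s$ to be an honest supersolution on $B_r$, i.e.\ $\det D^2\uus_s\le(1-s|\xi|^2)^{(\alpha-n-2)/2}/(-\uus_s)^\alpha$, which forces $\uus_s\ge\bar u^{s*}$; however the touching identity $\uus_s(\hat\xi)=u^{s*}(\hat\xi)\le\bar u^{s*}(\hat\xi)$ shows $\vec b\cdot\hat\xi+d\le 0$, so this nonnegativity fails precisely at $\hat\xi$. I expect the defect to be absorbable because $u^{s*}(\hat\xi)-\bar u^{s*}(\hat\xi)$ is uniformly bounded in $s$: either by running the comparison on a slightly smaller sub-ball $B_{r-\eta}$ (where $\uus_s\ge\bar u^{s*}$ does hold) and bridging the thin shell $\{r-\eta\le|\xi|\le r\}$ using the sphere barrier together with convexity of $u^{s*}$, or by inserting an $O(1)$ correction into $\uus_s$ that restores the supersolution property without altering the leading $\log|\log h^s|$ behaviour of the radial derivative.
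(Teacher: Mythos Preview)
Your plan is exactly the paper's proof: apply Lemma~\ref{local-upper-barrier-lem}, use the maximum principle to extend the sphere comparison to $\uus_s\ge u^{s*}$ on all of $B_r$, deduce $\partial_{\xi_1}u^{s*}(\hat\xi)\ge\partial_{\xi_1}\uus_s(\hat\xi)=\partial_{\xi_1}\bar u^{s*}(\hat\xi)+b_1$, and read off the $\log|\log h^s|$ growth from \eqref{gcf5} together with the uniform bound on $b_1$.

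The obstacle you raise in the third paragraph is not real, so no sub-ball bridging or correction term is needed. You do not need the ``self-referential'' supersolution inequality $\det D^2\uus_s\le(h^s)^{(\al-n-2)/2}/(-\uus_s)^\al$; it is enough that $\det D^2\uus_s<\det D^2u^{s*}$ pointwise, and this is already built into the construction of $\bar u^{s*}$. Indeed $\det D^2\uus_s=\det D^2\bar u^{s*}$, and $\rho>0$ was chosen (depending only on the uniform $C^0$ bound $M$ for $-u^{s*}$ from Lemma~\ref{c0-gcf-approx-lem}) so that
\[
\det D^2\bar u^{s*}<\frac{(h^s)^{(\al-n-2)/2}}{M^\al}\le\frac{(h^s)^{(\al-n-2)/2}}{(-u^{s*})^\al}=\det D^2 u^{s*}
\]
throughout $B_1$. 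Hence if $u^{s*}-\uus_s$ had a positive interior maximum in $B_r$, at that point $D^2u^{s*}\le D^2\uus_s$ would force $\det D^2u^{s*}\le\det D^2\uus_s$, a contradiction. In particular there is no requirement that $\uus_s\ge\bar u^{s*}$ or that $-\uus_s>0$; the linear term is invisible to the comparison.
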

\begin{proof}
When $\sqrt{\frac{2-\delta_0}{2s}}\leq\sqrt{\frac{2-\delta_1}{2s}}< r<1,$ for any $\hat{\xi}\in\dS^{n-1}(r)$ we assume $\hat{\xi}=(r, 0, \cdots, 0).$
By Lemma \ref{local-upper-barrier-lem}, there exists a supersolution of \eqref{gcf-approx} $\uus_s,$ such that $\uus_s(\hat{\xi})=u^{s*}(\hat{\xi})$ and
$\uus_s(\xi)>u^{s*}(\xi)$
for any $\xi\in\dS^{n-1}\setminus\{\hat{\xi}\}.$ By the maximum principle we get $\uus_s(\xi)>u^{s*}(\xi)$ in $B_r.$
Hence at $\hat{\xi}$ we obtain
\[\frac{\p u^{s*}}{\p\xi_1}>\frac{\p\uus_s}{\p\xi_1}=\frac{\p\bar{u}^{s*}}{\p\xi_1}+b_1.\]
Therefore, when $\delta_1>0$ is chosen to be small we complete the proof of this Lemma.
\end{proof}

\subsection{Local $C^2$ estimates}
Lemma \ref{lem-5.6} gives us local $C^1$ estimates for $u^{s*}$. In the following we will establish local $C^2$ estimates for the solution
$u^{s*}$ of equation \eqref{gcf-approx}. Comparing with usual local $C^2$ estimates, the complication here is as $s\goto 1$ and $|\xi|\goto 1,$ by Lemma \ref{lem-5.6*}
we know that $|Du^{s*}(\xi)|\goto\infty.$ In other words, we don't have uniform $C^1$ estimates. Therefore, we need to introduce some new techniques to overcome this difficulty.

Let $u_0^*$ be the solution of \eqref{gcf1.1}, denote $\eta^s:=u_0^*-u^{s*}$ and $f^s=(-u^{s*})^{-\al}(1-s|\xi|^2)^{(\al-n-2)/2},$ we prove
\begin{lemm}
\label{c2-local-aux}
Let $u^{s*}$ be a solution of \eqref{gcf-approx} for $s\in [1/2, 1).$ Then we have
\be\label{gcf-compare}
\eta^s<C(h^s)^{m_\al},
\ee
where $m_\al:=\frac{n-2+\al}{2n},$ $h^s=1-s|\xi|^2,$ and $C=C(n, \al, |u^*|_{C^0})>0$ is a constant independent of $s.$
\end{lemm}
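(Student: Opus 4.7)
The plan is to produce a pointwise barrier of the form
\[
\hat w := u_0^* - A\,(h^s)^{m_\al}
\]
for a constant $A=A(n,\al,|u^{s*}|_{C^0})$ to be chosen, and then invoke the Monge--Amp\`ere comparison principle. On $\p B_1$ we have $(h^s)^{m_\al}|_{\p B_1}=(1-s)^{m_\al}\geq 0$, so $\hat w\leq \varphi^*=u^{s*}$ on $\p B_1$. If in addition we can arrange that $\hat w$ is convex in $B_1$ and that $\det D^2\hat w\geq f^s=\det D^2 u^{s*}$ throughout $B_1$, the comparison principle gives $\hat w\leq u^{s*}$ in $B_1$, which is exactly $\eta^s\leq A(h^s)^{m_\al}$.

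The convexity and the determinant lower bound will come from an explicit computation together with Brunn--Minkowski. Writing $\phi(\xi):=-(1-s|\xi|^2)^{m_\al}$, one checks (it is radial) that its Hessian has one radial eigenvalue $\phi''$ and $n-1$ tangential eigenvalues $\phi'/r$, both strictly positive for $s\in[1/2,1)$ and $m_\al\in(0,1)$; hence $\phi$ is convex and $\hat w=u_0^*+A\phi$ is convex as well. A direct calculation yields
\[
\det D^2\phi = (2sm_\al)^n(1-s|\xi|^2)^{nm_\al-n-1}\bigl[1+s|\xi|^2(1-2m_\al)\bigr],
\]
and the exponent is $nm_\al-n-1=(\al-n-4)/2$. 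Combining with the Brunn--Minkowski inequality
$(\det D^2\hat w)^{1/n}\geq (\det D^2 u_0^*)^{1/n}+A(\det D^2\phi)^{1/n}$
and the $C^0$ lower bound $-u^{s*}\geq C_0>0$ from Lemma \ref{c0-gcf-approx-lem}, I would compare with $(f^s)^{1/n}=(h^s)^{(\al-n-2)/(2n)}(-u^{s*})^{-\al/n}$. The ratio
\[
\frac{(\det D^2\hat w)^{1/n}}{(f^s)^{1/n}}\geq A\cdot K\cdot (h^s)^{-1/n},
\]
where $K>0$ depends only on $n,\al,|u^{s*}|_{C^0}$, so it suffices to take $A=1/K$ (which is independent of $s$) to obtain the required inequality $\det D^2\hat w\geq f^s$ on all of $B_1$.

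The delicate point is precisely the choice of exponent $m_\al=(n-2+\al)/(2n)$: it is $1/n$ below the ``critical'' exponent $(\al+n)/(2n)$ at which $\det D^2\phi$ and $f^s$ would have the same boundary blow-up rate. This $1/n$ gap is what produces the winning factor $(h^s)^{-1/n}$ and allows us to absorb the smooth part $(\det D^2 u_0^*)^{1/n}$ and the $(-u^{s*})^{-\al/n}$ factor uniformly in $s$; trying to push $m_\al$ any closer to $(\al+n)/(2n)$ would force $A$ to depend on $s$. The only remaining nuisance is confirming that $1+s|\xi|^2(1-2m_\al)$ stays bounded below by a positive constant depending only on $n,\al$, which is easy since $m_\al\in(0,1)$ and $s|\xi|^2\leq 1$.
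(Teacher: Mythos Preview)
Your barrier argument is correct and complete, but it is a genuinely different route from the paper's. The paper argues by the maximum principle applied to the auxiliary function $\eta^{\gamma}h^{-1}$ with $\gamma=1/m_{\al}>1$: at an interior maximum $\xi_0$ one differentiates $\log(\eta^{\gamma}h^{-1})$ twice, uses the convexity of $u_0^*$ to drop $\s_n^{ii}(u_0^*)_{ii}>0$, and then invokes the Newton--Maclaurin inequality $\s_{n-1}\geq c(n)\s_n^{(n-1)/n}$ to obtain directly $n\gamma\geq c_0\,\eta\,h^{-m_{\al}}$ at $\xi_0$.

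The two proofs share the same structural ingredients in disguise: your Minkowski determinant inequality and their Newton--Maclaurin bound are both avatars of the concavity of $(\det)^{1/n}$, and both proofs exploit the strict convexity of $u_0^*$ (you, to make $\hat w$ convex and apply the comparison principle; they, to discard a positive term). Your approach is arguably more self-contained and yields an explicit constant, since one never differentiates the equation; the paper's test-function method, on the other hand, is the template that is reused (in a much more elaborate form) in the subsequent local $C^2$ estimate of Lemma~\ref{gcf-local-c2-lem}, where an explicit barrier of this simple shape would not be available. One small remark: your claim $m_{\al}\in(0,1)$ tacitly requires $n\geq 2$ (for $n=1$ and $\al<1$ one has $m_{\al}\leq 0$); the paper makes the same implicit assumption when asserting $\gamma>1$, so you are in good company.
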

\begin{proof}
For our convenience, in this proof, we will drop the superscript $s$ on $\eta^s, h^s, f^s,$ and $u^{s*}.$
Let $\gamma=\frac{1}{m_\al},$ since $\al\in(0, n],$ it's clear that $\gamma>1.$
Assume $\max\limits_{\xi\in B_1}\eta^\gamma h^{-1}$ is achieved at an interior point $\xi_0.$ We may rotate the coordinate such that at this point
$u^*_{ij}=u^*_{ii}\delta_{ij}.$ Moreover, at $\xi_0$ we have
\[0=\gamma\frac{\eta_i}{\eta}-\frac{h_i}{h},\]
and
\[
\begin{aligned}
0&\geq\gamma\s_n^{ii}\lt(\frac{\eta_{ii}}{\eta}-\frac{\eta_i^2}{\eta^2}\rt)-\frac{\s_n^{ii}h_{ii}}{h}+\s_n^{ii}\frac{h_i^2}{h^2}\\
&=\gamma\s_n^{ii}\frac{\eta_{ii}}{\eta}+(\gamma^2-\gamma)\s_n^{ii}\frac{\eta_i^2}{\eta^2}+2s\frac{\sum\s_n^{ii}}{h}.
\end{aligned}
\]
Since $u_0^*$ is convex, we get $\s_n^{ii}(u_0^*)_{ii}>0,$ the above inequality becomes
\[0\geq\frac{-n\gamma\s_n}{\eta}+2s\frac{\sum\s_n^{ii}}{h}.\]
Recall that $\sum\s_n^{ii}=\s_{n-1}\geq c(n)\s_n^{\frac{n-1}{n}}$ and $s\in[1/2, 1),$ we conclude
\[n\gamma\geq\frac{c(n)\eta}{h\s_n^{1/n}}\geq c_0\frac{\eta}{h^{1-\frac{n+2-\al}{2n}}}=c_0\frac{\eta}{h^{m_\al}},\]
where $c(n)>0$ is a constant depending on $n$ and $c_0$ is a constant depending on $|u^*|_{C^0}$ and $\al.$
Therefore, we conclude that at $\xi_0$
\[\eta^\gamma h^{-1}\leq C,\]
where $C=C(n, \al, |u^*|_{C^0})$ is independent of $s$.
\end{proof}

\begin{lemm}
\label{gcf-local-c2-lem}
Let $u^{s*}$ be a solution of \eqref{gcf-approx} for $s\in[1/2, 1)$. Then we have
\[\max\limits_{\xi\in B_1,\,\, \zeta\in\mathbb{S}^n}\eta^{\beta}u^{s*}_{\zeta\zeta}\leq C.\]
Here, $\beta=\frac{8}{m_\al}$ and $C$ only depends on the $C^0$ estimates of $u^{s*}$ and the local $C^1$ estimates we obtained in Lemma \ref{lem-5.6}.
\end{lemm}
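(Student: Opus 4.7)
The strategy is a Pogorelov-type interior second-derivative estimate on the auxiliary function
\[W(\xi,\zeta) = \beta\log\eta^s(\xi) + \log u^{s*}_{\zeta\zeta}(\xi), \qquad (\xi,\zeta)\in\bar{B}_1\times\dS^{n-1},\]
where $\eta^s=u_0^*-u^{s*}$ vanishes on $\p B_1$ and serves as the cutoff, in place of the $e^{|Du|^2/2}$ factor used in the classical Pogorelov ansatz, which is unavailable here because $|Du^{s*}|$ blows up as $s,|\xi|\goto 1$. Since $\log\eta^s\goto-\infty$ on $\p B_1$, the supremum of $W$ is attained at some interior $\xi_0\in B_1$ and direction which, after rotation, we take to be $e_1$; we simultaneously diagonalize $D^2 u^{s*}(\xi_0)$ and, without loss of generality, assume $u^{s*}_{11}$ is the largest eigenvalue.

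The first-order condition $W_i(\xi_0)=0$ gives $u^{s*}_{11i}/u^{s*}_{11}=-\beta\eta^s_i/\eta^s$. Differentiating $\log\det D^2u^{s*}=\log f^s$ twice in the $e_1$-direction and retaining the $j=1$ slice of the concavity contribution $u^{*ii}u^{*jj}(u^{s*}_{ij1})^2$, the first-order relation turns the resulting third-derivative squared term into $\beta^2 u^{s*}_{11}\,u^{*ii}(\eta^s_i)^2/(\eta^s)^2$, so that
\[\frac{u^{*ii}u^{s*}_{11ii}}{u^{s*}_{11}} \geq \frac{(\log f^s)_{11}}{u^{s*}_{11}} + \beta^2\frac{u^{*ii}(\eta^s_i)^2}{(\eta^s)^2}.\]
A direct computation gives
\[(\log f^s)_{11} = \al\frac{u^{s*}_{11}}{-u^{s*}} + \al\frac{(u^{s*}_1)^2}{(u^{s*})^2} + (n+2-\al)s\Big(\frac{1}{h^s}+\frac{2s\xi_1^2}{(h^s)^2}\Big) \geq \al\frac{u^{s*}_{11}}{-u^{s*}},\]
each piece non-negative since $\al\leq n$. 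Plugging these into $u^{*ii}W_{ii}\leq 0$, the $\beta^2$ terms cancel against the $\beta(\beta+1)$ piece coming from direct differentiation of $\log\eta^s$, leaving the key inequality
\[\beta\frac{u^{*ii}(\eta^s_i)^2}{(\eta^s)^2} \geq \frac{\al}{-u^{s*}} + \beta\frac{u^{*ii}\eta^s_{ii}}{\eta^s}.\]

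The proof then reduces to reconciling three inputs: (a) Lemma \ref{lem-5.6} gives $|Du^{s*}|\leq C/h^s$, hence $|\eta^s_i|\leq C/h^s$; (b) Lemma \ref{c2-local-aux} gives $\eta^s\leq C(h^s)^{m_\al}$, so $(\eta^s)^\beta\leq C(h^s)^{\beta m_\al}=C(h^s)^8$ for $\beta=8/m_\al$; (c) the Monge--Amp\`ere constraint $\prod u^{s*}_{ii}=f^s$ together with $u^{s*}_{11}$ being the largest eigenvalue yields $\min u^{s*}_{ii}\geq f^s/(u^{s*}_{11})^{n-1}$, whence $\sum u^{*ii}\leq C(u^{s*}_{11})^{n-1}/f^s$. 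Bounding $u^{*ii}(\eta^s_i)^2\leq C(h^s)^{-2}\sum u^{*ii}$ and inserting into the key inequality, while using the MacLaurin lower bound $u^{*ii}(u_0^*)_{ii}\geq n(C_1 f^s)^{-1/n}\geq c(h^s)^{1-m_\al}$ on the right, produces a polynomial inequality relating $u^{s*}_{11}$ with negative powers of $h^s$. The generous cutoff $(\eta^s)^\beta\leq C(h^s)^8$ is then sufficient to absorb the resulting $(h^s)^{-p}$ blow-up, yielding $(\eta^s)^\beta u^{s*}_{\zeta\zeta}\leq C$.

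The main technical obstacle is the meticulous bookkeeping of the powers of $h^s$ produced simultaneously by the gradient blow-up, the Monge--Amp\`ere product constraint, and the cutoff weight $\eta^s$, so as to confirm that $\beta=8/m_\al$ indeed provides enough slack. Unlike the classical Pogorelov estimate, no $|Du|^2$ correction is available to absorb the bad $u^{*ii}(\eta^s_i)^2$ term directly, so the argument hinges on the strong decay $(\eta^s)^\beta\leq C(h^s)^8$ combined with the non-trivial upper bound on $\sum u^{*ii}$ furnished by the eigenvalue hierarchy.
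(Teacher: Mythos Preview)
Your derivation up to the key inequality
\[\beta\,\frac{u^{*ii}(\eta^s_i)^2}{(\eta^s)^2}\ \geq\ \frac{\al}{-u^{s*}}+\beta\,\frac{u^{*ii}\eta^s_{ii}}{\eta^s}\]
is correct, but the inequality cannot close the estimate: neither side contains a term that grows with $u^{s*}_{11}$. The right-hand side is at best $\beta(C_6\sum u^{*ii}-n)/\eta^s+C$ (or, with your AM--GM variant, $\beta(c(h^s)^{1-m_\al}-n)/\eta^s$, which is \emph{negative} for small $h^s$), while the left-hand side is at most $\beta C(h^s)^{-2}\sum u^{*ii}/(\eta^s)^2$. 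Both sides scale with $\sum u^{*ii}$, and the bad coefficient $|D\eta^s|^2/(\eta^s)^2$ dominates the good coefficient $C_6/\eta^s$ precisely in the dangerous regime $|D\eta^s|^2>C_6\eta^s$ near $\p B_1$. Inserting your bound $\sum u^{*ii}\leq C(u^{s*}_{11})^{n-1}/f^s$ into the left-hand side produces an inequality of the form $(u^{s*}_{11})^{n-1}\geq(\cdots)$ --- the wrong direction. No amount of cutoff weight $(\eta^s)^\beta\leq C(h^s)^8$ can repair this, because there is simply no upper bound on $u^{s*}_{11}$ to weight.

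The paper's proof augments your test function with the factor $(1-g/M)^{-1}$, where $g=(h^s)^4|Du^{s*}|^2$; this is a Pogorelov gradient correction, tempered by $(h^s)^4$ so that $g$ stays bounded by Lemma~\ref{lem-5.6}. The crucial gain is that $\s_n^{ii}g_{ii}$ contains $2(h^s)^4\s_n^{ii}\sum_k(u^{s*}_{ki})^2=2(h^s)^4\s_n\s_1\geq 2(h^s)^4\s_n u^{s*}_{11}$, the missing good term linear in $u^{s*}_{11}$. To balance the remaining error terms one must also split $i=1$ from $i\geq 2$: for $i\geq 2$ the bad pieces $\s_n^{ii}(\eta^s_i)^2/(\eta^s)^2$ are traded via the first-order condition for $(u^{s*}_{11i}/u^{s*}_{11})^2$ and absorbed using the full concavity contribution $2\sum_{p\geq 2}\s_n^{pp}(u^{s*}_{11p})^2/(u^{s*}_{11})^2$ (not merely the $j=1$ slice you retained). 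One is left with $(h^s)^4u^{s*}_{11}\leq C\big(1+\eta^{-1}+|Du^{s*}|^2\eta^{-2}\big)$, and multiplying by $\eta^{\beta/2}$ together with Lemma~\ref{c2-local-aux} (giving $(h^s)^4\geq c\,\eta^{\beta/2}$ and $\eta^{2/m_\al}|Du^{s*}|^2\leq C$) yields $\eta^\beta u^{s*}_{11}\leq C$.
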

\begin{proof}
In this proof, for our convenience, we will drop the superscript $s$. We denote
$h=1-s|\xi|^2,$ then $h_i=-2s\xi_i$ and $h_{ij}=-2s\delta_{ij}.$
We also note, differentiating $f=(-u^*)^{-\al}h^{\frac{\al-n-2}{2}}$ twice we get
\[f_i=f\lt[\frac{\al u^*_i}{-u^*}+\frac{(\al-n-2)}{2}h^{-1}h_i\rt]\]
and
\[
\begin{aligned}
f_{ii}&=f\lt[\frac{\al u^*_i}{-u^*}+\frac{(\al-n-2)}{2}h^{-1}h_i\rt]^2\\
&+f\lt[\frac{\al u^*_{ii}}{-u^*}+\frac{\al u_i^{*2}}{u^{*2}}-\frac{(\al-n-2)}{2}h^{-2}h_i^2+\frac{(\al-n-2)}{2}h^{-1}h_{ii}\rt].
\end{aligned}
\]
Moreover, applying Lemma \ref{lem-5.6} we may assume
\[h^2|Du^*|^2<\m_0\,\,\mbox{and $h^4|Du^*|^2<\m_0,$}\]
for some positive constant $\m_0>1.$
Let $g=h^4|Du^*|^2$ and differentiate $g$ twice, we get
\be\label{gcf1.2}
g_i=4h^3h_i|Du^*|^2+2h^4\sum_ku_k^*u^*_{ki},
\ee
and
\be\label{gcf1.3}
\begin{aligned}
g_{ii}&=12h^2h^2_i|Du^*|^2+4h^3|Du^*|^2h_{ii}+16h^3\sum_kh_iu_k^*u_{ki}^*\\
&+2h^4\sum_{k}u^{*2}_{ki}+2h^4\sum_ku^*_ku^*_{kii}.
\end{aligned}
\ee
Now we consider $\phi=\frac{\eta^\beta u^*_{\zeta\zeta}}{1-\frac{g}{M}},$ where $\beta>0,$ $M>2\m_0$ are some constants to be determined, and $\zeta\in\mathbb{S}^n$ is some direction. Suppose
\[\hat{M}:=\max\limits_{\xi\in B_1,\,\, \zeta\in\mathbb{S}^n}\phi\] is achieved at an interior point $\xi_0\in B_1$ in the direction of $\zeta_0\in\mathbb{S}^n.$
We may choose a local orthonormal frame $\{e_1, \cdots, e_n\}$ at $\xi_0,$ such that $u^*_{ij}(\xi_0)$ is diagonal and we also assume $\zeta_0=e_1.$
Then at $\xi_0$ we have
\[\log\phi=\beta\log\eta-\log\lt(1-\frac{g}{M}\rt)+\log u_{11}^*.\]
Differentiating $\log\phi$ twice we get
\be\label{gcf1.4}
0=\frac{\phi_i}{\phi}=\frac{\beta\eta_i}{\eta}+\frac{g_i}{M-g}+\frac{u_{11i}^*}{u_{11}^*}
\ee
and
\be\label{gcf1.5}
0\geq\s_n^{ii}\lt[\frac{\beta\eta_{ii}}{\eta}-\frac{\beta\eta_i^2}{\eta^2}+\frac{g_{ii}}{M-g}+\frac{g_i^2}{(M-g)^2}+\frac{u^*_{11ii}}{u^*_{11}}-
\lt(\frac{u^*_{11i}}{u^*_{11}}\rt)^2\rt].
\ee
By \eqref{gcf1.4} we can see that when $i=1$ we have
\be\label{number1}
\lt(\frac{u^*_{111}}{u^*_{11}}\rt)^2=\lt(\frac{\beta\eta_1}{\eta}+\frac{g_1}{M-g}\rt)^2\leq\frac{2\beta^2\eta_1^2}{\eta^2}+\frac{2g_1^2}{(M-g)^2}.
\ee
When $i\geq 2$
\be\label{number2}
\beta\lt(\frac{\eta_i}{\eta}\rt)^2=\frac{1}{\beta}\lt(\frac{g_i}{M-g}+\frac{u^*_{11i}}{u^*_{11}}\rt)^2
\leq\frac{2g_i^2}{\beta(M-g)^2}+\frac{2}{\beta}\lt(\frac{u^*_{11i}}{u^*_{11}}\rt)^2.
\ee
Note also that
\[\sum_k\s_n^{ii}u_k^*u^*_{kii}=\sum_ku_k^*f_k=f\lt(\frac{\al|Du^*|^2}{-u^*}+\frac{(\al-n-2)}{2}h^{-1}\sum_kh_ku^*_k\rt).\]
Therefore,
\be\label{gcf1.6}
\begin{aligned}
\s_n^{ii}g_{ii}&\geq 12h^2|Du^*|^2\s_n^{ii}\xi_i^2-8h\m_0\sum\s_n^{ii}-32h^2\sqrt{\m_0}n\s_n\\
&+2h^4\s_n\s_1+2h^4\s_n\lt(\frac{\al|Du^*|^2}{-u^*}+\frac{(\al-n-2)}{2}h^{-1}\sum_kh_ku^*_k\rt),
\end{aligned}
\ee
and
\be\label{gcf1.7}
\s_n^{11}g_1^2\leq\s_n^{11}\lt(32h^6h_1^2|Du^*|^4+8h^8u_1^{*2}u^{*2}_{11}\rt)
<128\m_0h^4|Du^*|^2\s_n^{11}\xi_1^2+8h^6\m_0\s_nu^*_{11}.
\ee
Combining \eqref{gcf1.6} and \eqref{gcf1.7} we obtain
\be\label{gcf1.8}
\begin{aligned}
&\frac{\s_n^{ii}g_{ii}}{M-g}-\frac{\s_n^{11}g_1^2}{(M-g)^2}\\
&\geq\frac{1}{(M-g)^2}\big\{(M-g)[12h^2|Du^*|^2\s_n^{ii}\xi_i^2-36h^2\sqrt{\m_0}n\s_n-8h\m_0\sum\s_n^{ii}+2h^4\s_n\s_1]\\
&-128\m_0h^4|Du^*|^2\s_n^{11}\xi_1^2-8h^6\m_0\s_nu^*_{11}\big\}\\
\end{aligned}
\ee
Choose $M=13\m_0+N$ such that $M-g\geq 12\m_0+N$ then
\be\label{gcf1.9}
\begin{aligned}
&\frac{\s_n^{ii}g_{ii}}{M-g}-\frac{\s_n^{11}g_1^2}{(M-g)^2}\\
&\geq\frac{1}{(M-g)^2}[-(M-g)36h^2\sqrt{\m_0}n\s_n-8(M-g)h\m_0\sum\s_n^{ii}+2Nh^4\s_n\s_1],
\end{aligned}
\ee
where we have used $\s_1>u^*_{11}.$
Differentiating $\s_n=f$ twice we get
\[\s_n^{ii}u^*_{11ii}+\s_n^{pq, rs}u^*_{pq1}u^*_{rs1}=f_{11}.\]
Thus,
\be\label{gcf1.10}
\begin{aligned}
\s_n^{ii}u^*_{11ii}&=f_{11}+\sum\limits_{p\neq q}\s_n^{pp, qq}u^{*2}_{pq1}-\sum\limits_{p\neq q}
\s_n^{pp, qq}u^*_{pp1}u^*_{qq1}\\
&\geq f_{11}+2\sum_{p=2}^n\frac{\s_n}{u^*_{pp}u^*_{11}}u^{*2}_{11p}-\frac{f^2_1}{f}.
\end{aligned}
\ee
Notice that
\[
\begin{aligned}
&f_{11}-\frac{f_1^2}{f}\\
&=f\lt[\frac{\al u^*_{11}}{-u^*}+\frac{\al u_1^{*2}}{-u^*}+\frac{(n+2-\al)}{2}h^{-2}h_1^2+(n+2-\al)sh^{-1}\rt]\\
&\geq C_5 u^*_{11}f,
\end{aligned}
\]
we conclude
\be\label{gcf1.11}
\frac{\s_n^{ii}u^*_{11ii}}{u_{11}^*}\geq C_5f+2\sum_{p=2}^n\frac{\s_n^{pp}}{u^{*2}_{11}}u^{*2}_{11p}.
\ee
By a straightforward calculation we can see
\[\s_n^{ii}\eta_{ii}=\s_n^{ii}((u_{0}^*)_{ii}-u^*_{ii})\geq C_6\sum\s_n^{ii}-n\s_n.\]
Combining \eqref{number1}, \eqref{number2} with \eqref{gcf1.5} we obtain
\[
\begin{aligned}
0&\geq\s_n^{ii}\frac{\beta\eta_{ii}}{\eta}-\frac{(\beta+2\beta^2)\s_n^{11}\eta_1^2}{\eta^2}+\frac{\s_n^{ii}g_{ii}}{M-g}-\frac{\s_n^{11}g_1^2}{(M-g)^2}\\
&-\lt(1+\frac{2}{\beta}\rt)\sum\limits_{i\geq 2}\s_n^{ii}\lt(\frac{u^*_{11i}}{u^*_{11}}\rt)^2+\frac{\s_n^{ii}u^*_{11ii}}{u^*_{11}}
+\lt(1-\frac{2}{\beta}\rt)\sum_{i\geq 2}\frac{\s_n^{ii}g_i^2}{(M-g)^2}.
\end{aligned}
\]
When $\beta\geq 2,$ applying \eqref{gcf1.9} and\eqref{gcf1.11} we get
\be\label{gcf1.12}
\begin{aligned}
0&\geq\frac{\beta}{\eta}(C_6\s_n^{ii}-n\s_n)-\frac{(\beta+2\beta^2)\s_n((u^*_{0})_1-u^*_1)^2}{\eta^2}\\
&-\frac{36h^2n\sqrt{\m_0}\s_n}{N}-\frac{8\m_0h\sum\s_n^{ii}}{N}+\frac{2Nh^4\s_n\s_1}{M^2}+C_5\s_n
\end{aligned}
\ee
We will choose $N$ large such that $\frac{C_6\beta}{\eta}>\frac{8\m_0}{N}.$
Therefore, \eqref{gcf1.12} becomes
\be\label{gcf1.12*}0\geq-\frac{n\beta}{\eta}\s_n-\frac{(\beta+2\beta^2)\s_n(C_6+|Du^*|)^2}{\eta^2}-C_7\s_n+\frac{2Nh^4\s_n u^*_{11}}{M^2}.\ee
By Lemma \ref{c2-local-aux} we know $\eta^\frac{1}{m_\al}<Ch,$ which gives $h>C\eta^{\frac{1}{m_\al}}.$ Thus, we have
$$\eta^{\frac{1}{m_\al}}|Du^*|<C\m_0.$$

Now, let $\beta=\frac{8}{m_\al}>8$ and multiplying \eqref{gcf1.12*} by $\eta^{\frac{\beta}{2}},$ we obtain
\[0\geq-n\beta\eta^{\frac{\beta}{2}-1}-(\beta+2\beta^2)(C_6+|Du^*|)^2\eta^{\frac{\beta}{2}-2}-C_7\eta^{\frac{\beta}{2}}
+C_8N\frac{\eta^\beta u^*_{11}}{M^2}.\]
Therefore, we conclude that $\eta^{\beta}u^*_{11}<C_9$ at its interior maximum point, which implies $\phi<2C_9.$
\end{proof}

\begin{proof}[Proof of Theorem \ref{int-thm1}]
By subsection 2.1 we know there exists a solution $u^{s*}$ of \eqref{gcf-approx} for any $s\in (0, 1)$. Combining Lemma \ref{c0-gcf-approx-lem}, \ref{lem-5.6}, \ref{gcf-local-c2-lem} with the classic regularity theorem, we know that there exists a subsequence of $u^{s*}$ denoted by $\{u^{s_j*}\}_{j=1}^\infty,$ converging locally
smoothly to a convex function $u^*,$ which satisfies \eqref{gcf2}. Here, $s_j\goto 1$ as $j\goto\infty.$
Moreover, applying Lemma \ref{lem-5.6*} and Lemma 14 of \cite{WX20} we conclude, the Legendre transform of $u^*,$ denoted by $u,$ is the desired entire solution of \eqref{gcf1} satisfying the asymptotic condition \eqref{gcf1*}. This completes the proof of Theorem \ref{int-thm1}.
\end{proof}

\section{$\s_k$ curvature self-expander}
\label{skc}
In this section we will show that there exists an entire, strictly spacelike solution to the following equation
\be\label{se1}
\sigma_k(\ka[\M_u])=\lt(-\lt<X, \nu\rt>\rt)^\al,
\ee
and
\be\label{se1*}
u(x)-|x|\goto\varphi\lt(\frac{x}{|x|}\rt)\,\,\mbox{as $|x|\goto\infty,$}
\ee
where $0<\al\leq k$ are constants.
If $u$ is a strictly convex solution satisfying \eqref{se1} and \eqref{se1*}, then subsection 2.3 and Lemma 14 of \cite{WX20} imply its Legendre transform $u^*$ satisfies
\be\label{se2}
\left\{
\begin{aligned}
\frac{\sigma_n}{\s_{n-k}}(w^*\ga_{ik}^*u^*_{kl}\ga_{lj}^*)&=\lt(\frac{\sqrt{1-|\xi|^2}}{-u^*}\rt)^\al\,\,&\mbox{in $B_1$}\\
u^*&=\varphi^*\,\,&\mbox{on $\p B_1$},
\end{aligned}
\right.
\ee
here $\varphi^*(\xi)=-\varphi(\xi).$
By Section \ref{gcf} we know there exists $\ubar{u}$ such that
\[\s_n(\ka[\M_{\ubar{u}}])=\frac{1}{{n\choose k}^{\frac{n}{k}}}\lt(-\lt<X, \nu\rt>\rt)^{\frac{\al n}{k}},\]
and $\ubar{u}(x)-|x|\goto\varphi\lt(\frac{x}{|x|}\rt)$ as $|x|\goto\infty.$
Applying Maclaurin's inequality we obtain
\[\s_k(\ka[\M_{\ubar{u}}])\geq\lt(-\lt<X, \nu\rt>\rt)^\al.\]
We will denote the Lengendre transform of $\ubar{u}$ by $\lus,$ then $\lus$ satisfies
\be\label{skc0.1}
\left\{\begin{aligned}
\frac{\s_n}{\s_{n-k}}(w^*\ga^*_{ik}\lus_{kl}\ga^*_{lj})&\leq\lt(\frac{\sqrt{1-|\xi|^2}}{-\lus}\rt)^\al\,\,&\mbox{in $B_1$},\\
\lus&=\varphi^*\,\,&\mbox{on $\p B_1$.}
\end{aligned}
\right.
\ee
We will study the following approximate equation
\be\label{skc0.2}
\left\{\begin{aligned}
\frac{\s_n}{\s_{n-k}}(w^*\ga^*_{ik}u^*_{kl}\ga^*_{lj})&=\lt(\frac{\sqrt{1-|\xi|^2}}{-u^*}\rt)^\al\,\,&\mbox{in $B_r$},\\
u^*&=\lus\,\,&\mbox{on $\p B_r$,}
\end{aligned}
\right.
\ee
where $0<r<1.$
In the following we denote $\Psi^*:=\lt(\frac{\sqrt{1-|\xi|^2}}{-\lus}\rt)^\al,$ and we can see that as long as $-u^*>0$ we have
\[\frac{\p\Psi^*}{\p u^*}=\al(\sqrt{1-|\xi|^2})^\al(-u^*)^{-\al-1}>0.\]
This guarantees that the maximum principle holds for \eqref{skc0.2}.
Now assume $\max\limits_{\xi\in\p B_1}\varphi^*(\xi)=-C_0<0,$ let $\bar{u}$ be a constant $\s_k$ curvature hypersurface satisfying
$\s_k(\ka[\M_{\bar{u}}])=C_0^\al,$ $\bar{u}$ is strictly convex, and $\bar{u}(x)-|x|\goto\varphi\lt(\frac{x}{|x|}\rt)$
as $|x|\goto\infty.$
We denote the Legendre transform of $\bar{u}$ by $\uus,$ then $\uus$ satisfies
\be\label{skc0.3}
\left\{
\begin{aligned}
\frac{\s_n}{\s_{n-k}}(w^*\ga^*_{ik}\uus_{kl}\ga^*_{lj})&=\frac{1}{C_0^\al}\geq\lt(\frac{\sqrt{1-|\xi|^2}}{-\lus}\rt)^\al
\,\,&\mbox{ in $B_1$}\\
\uus&=\varphi^*\,\,&\mbox{on $\p B_1.$}
\end{aligned}
\right.
\ee
By the maximal principle we know $\uus<\lus$ in $B_1.$ Moreover, for any solution $u^{r*}$ of \eqref{skc0.2}, it is easy to see that
\[\uus<u^{r*}<\lus\,\,\mbox{in $B_r.$}\]
Therefore, we conclude
\begin{lemm}
\label{c0-sk-lemma}
Let $u^{r*}$ be a solution of \eqref{skc0.2} and $\lus, \uus$ are constructed above. Then we have
\[\uus<u^{r*}<\lus\,\,\mbox{in $B_r.$}\]
\end{lemm}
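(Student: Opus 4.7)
The plan is to establish both inequalities by straightforward comparison-principle arguments, leveraging two structural facts already in the setup: the Hessian quotient operator $F[u^*]:=(\s_n/\s_{n-k})(w^*\ga^*_{ik}u^*_{kl}\ga^*_{lj})$ is elliptic, i.e.\ monotone in $D^2u^*$ on the convex cone; and the source term $\Psi^*(\xi,u^*)=\bigl(\sqrt{1-|\xi|^2}/(-u^*)\bigr)^\al$ is \emph{strictly} increasing in $u^*$ on $\{u^*<0\}$, as already observed in the paragraph preceding the lemma.

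First I would handle the upper bound $u^{r*}<\lus$ in $B_r$. By \eqref{skc0.1}, $\lus$ is a supersolution of the PDE on $B_1\supset B_r$, $u^{r*}$ is a solution on $B_r$, and the two agree on $\partial B_r$ by the boundary condition in \eqref{skc0.2}. If $u^{r*}-\lus$ were to attain a positive interior maximum at some $\xi_0\in B_r$, then $D^2 u^{r*}\leq D^2\lus$ at $\xi_0$, and ellipticity of $F$ gives $F[u^{r*}](\xi_0)\leq F[\lus](\xi_0)$; combined with $F[u^{r*}]=\Psi^*(\xi,u^{r*})$ and $F[\lus]\leq \Psi^*(\xi,\lus)$, this forces $\Psi^*(\xi_0,u^{r*})\leq \Psi^*(\xi_0,\lus)$, contradicting strict monotonicity since $u^{r*}(\xi_0)>\lus(\xi_0)$. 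The strong maximum principle then upgrades $u^{r*}\leq \lus$ to strict inequality inside $B_r$.

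For the lower bound $\uus<u^{r*}$ in $B_r$, the key preliminary step is to upgrade $\uus$ to a subsolution of the actual PDE --- not immediate from \eqref{skc0.3} alone, which only bounds $F[\uus]$ below by $\Psi^*(\xi,\lus)$. Using the already-established $\uus<\lus$ in $B_1$ together with monotonicity of $\Psi^*$, one chains
\[\Psi^*(\xi,\uus)\leq \Psi^*(\xi,\lus)\leq \frac{1}{C_0^\al}=F[\uus],\]
so $\uus$ is a subsolution on $B_1$, hence on $B_r$. Since $\uus<\lus=u^{r*}$ on $\partial B_r$, the same interior-maximum argument (now applied to $\uus-u^{r*}$) yields $\uus\leq u^{r*}$ in $B_r$, and the strong maximum principle again promotes this to strict inequality.

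I do not anticipate serious obstacles. The only subtlety is the subsolution conversion for $\uus$ in the second step, which requires routing the previously established strict inequality $\uus<\lus$ back through the monotonicity of $\Psi^*$ in order to replace $\Psi^*(\xi,\lus)$ by $\Psi^*(\xi,\uus)$ on the right-hand side of \eqref{skc0.3}. Beyond that, both comparisons are routine, with the strict monotonicity of $\Psi^*$ in $u^*$ being precisely the feature that closes each contradiction sharply.
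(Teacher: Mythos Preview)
Your proposal is correct and follows essentially the same approach as the paper, which simply observes that the monotonicity $\partial\Psi^*/\partial u^*>0$ makes the comparison principle available and then asserts that $\uus<u^{r*}<\lus$ is ``easy to see.'' Your write-up spells out the one nontrivial step the paper leaves implicit---routing through $\uus<\lus$ to convert \eqref{skc0.3} into a genuine subsolution inequality $F[\uus]\geq\Psi^*(\xi,\uus)$---but the underlying argument is the same.
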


\subsection{Global a priori estimates}
In the subsection, we will prove a priori estimates that needed for the solvability of \eqref{skc0.2}.
\label{ape}
\begin{lemm}
\label{c1-sk-lemma}
Let $u^{r*}$ be a solution of \eqref{skc0.2}, then there exists $C>0$ such that
\[|Du^{r*}|<C.\]
\end{lemm}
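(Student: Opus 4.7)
The plan is to exploit the strict convexity of $u^{r*}$ to reduce a global gradient bound to a boundary gradient bound, and then control the tangential and normal derivatives on $\p B_r$ separately. Since $\s_n/\s_{n-k}$ is only defined as an elliptic operator on the positive cone, any admissible solution $u^{r*}$ of \eqref{skc0.2} has strictly positive definite Hessian (as $\gamma^*$ is invertible and $w^*>0$ in $B_r$), so $u^{r*}$ is strictly convex on the convex domain $B_r$. This forces $|Du^{r*}|$ to attain its maximum on $\p B_r$, so it suffices to bound $|Du^{r*}|$ pointwise on $\p B_r$.

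On $\p B_r$ I decompose $Du^{r*}$ into its tangential and outward normal components. Since $u^{r*}=\lus$ on $\p B_r$ and $\lus$ is smooth on $\bar B_r\subset B_1$, the tangential component equals the tangential component of $D\lus$ and is therefore bounded by $|\lus|_{C^1(\bar B_r)}$. For the normal derivative $\p_\nu u^{r*}$, the lower bound comes at once from Lemma \ref{c0-sk-lemma}: because $u^{r*}\leq\lus$ in $B_r$ with equality on $\p B_r$, the nonnegative function $\lus-u^{r*}$ attains its minimum on $\p B_r$, so $\p_\nu u^{r*}\geq \p_\nu \lus$.

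For the matching upper bound, I construct an explicit subsolution barrier of the form
\[
\bar v(\xi):=\lus(\xi)-C(r^2-|\xi|^2),\qquad C>0\ \text{large}.
\]
Then $\bar v=\lus$ on $\p B_r$, the Hessian $D^2\bar v=D^2\lus+2CI$ is strictly positive definite (so $\bar v$ is admissible), and $-\bar v\geq -\lus\geq C_0>0$ in $\bar B_r$, keeping $\Psi^*(\bar v)$ uniformly bounded. Using monotonicity and degree-$k$ homogeneity of $\s_n/\s_{n-k}$ on the positive cone together with $w^*\geq\sqrt{1-r^2}>0$ on $\bar B_r$, one obtains
\[
\tfrac{\s_n}{\s_{n-k}}(W[\bar v])\;\geq\;\tfrac{\s_n}{\s_{n-k}}\!\left(2Cw^*(\gamma^*)^2\right)\;=\;(2Cw^*)^k\,\tfrac{\s_n}{\s_{n-k}}\!\left((\gamma^*)^2\right),
\]
which grows like $C^k$ as $C\to\infty$ while $\Psi^*(\bar v)$ stays bounded. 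Hence for $C$ sufficiently large, depending on $|\lus|_{C^2(\bar B_r)}$, $C_0$, and $r$, $\bar v$ is a subsolution of \eqref{skc0.2}. Since $\Psi^*$ is increasing in $u^*$, the comparison principle for \eqref{skc0.2} (which the paper has already observed to hold) gives $\bar v\leq u^{r*}$ in $B_r$, and therefore
\[
\p_\nu u^{r*}\leq \p_\nu \bar v=\p_\nu \lus+2Cr\qquad\text{on}\ \p B_r.
\]
Combining this with the tangential bound and the lower bound on $\p_\nu u^{r*}$ yields $|Du^{r*}|<C$ on $\bar B_r$.

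The main obstacle is verifying the subsolution inequality $(\s_n/\s_{n-k})(W[\bar v])\geq\Psi^*(\bar v)$; this uses the strict bound $r<1$ in an essential way through the lower bound $w^*\geq\sqrt{1-r^2}$. In the limit $r\to 1$ the barrier degenerates, which mirrors the loss of a uniform $C^1$ bound in the original problem and is exactly the difficulty confronted by the more delicate local techniques of Section \ref{gcf}.
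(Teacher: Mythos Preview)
Your proof is correct and follows essentially the same strategy as the paper: both reduce the global gradient bound to a boundary estimate via convexity and then control the outward normal derivative by a subsolution barrier. The paper simply invokes the Caffarelli--Nirenberg--Spruck construction from \cite{CNS3} to produce a subsolution $\ubar{u}^{r*}$ with $\frac{\s_n}{\s_{n-k}}(W[\ubar{u}^{r*}])\geq C_0^{-\al}$ and $\ubar{u}^{r*}=\lus$ on $\p B_r$, whereas you write out an explicit barrier $\bar v=\lus-C(r^2-|\xi|^2)$; these are the same device, and your observation that the bound necessarily depends on $r$ through $w^*\geq\sqrt{1-r^2}$ is exactly the reason this estimate is only used for solvability of \eqref{skc0.2} and not for passing to the limit.
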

\begin{proof}
By Section 2 of \cite{CNS3}, we know that for any $0<r<1,$ we can construct a subsolution $\ubar{u}^{r*}$ such that
\[
\begin{aligned}
\frac{\s_n}{\s_{n-k}}(w^*\ga_{ik}^*\ubar{u}_{kl}^{r*}\ga^*_{lj})&\geq\frac{1}{C_0^{\al}}\,\,&\mbox{in $B_r$}\\
\ubar{u}^{r*}&=\ubar{u}^*\,\,&\mbox{on $\p B_r.$}
\end{aligned}
\]
Then by the convexity of $u^{r*}$ we have
\[|Du^{r*}|\leq\max\limits_{\p B_r}|D\ubar{u}^{r*}|.\]
\end{proof}

Let $v=\lt<X, \nu\rt>=\frac{x\cdot Du-u}{\sqrt{1-|Du|^2}}=\frac{u^*}{\sqrt{1-|Du|^2}}.$ We will consider the hyperbolic model of \eqref{skc0.2} (seeing \cite{WX20} for detail).
\be\label{ape0.1}
\left\{
\begin{aligned}
F(v_{ij}-v\delta_{ij})&=(-v)^{-\al}\,\,&\mbox{in $U_r,$}\\
v&=\frac{\lus}{\sqrt{1-r^2}}\,\,&\mbox{on $\p U_r,$}
\end{aligned}
\right.
\ee
where $v_{ij}$ denotes the covariant derivative with respect to the hyperbolic metric, $U_r =P^{-1}(B_r)\subset\mathbb{H}^n(-1),$
$F(v_{ij}-v\delta_{ij})=\frac{\s_n}{\s_{n-k}}(\lambda[v_{ij}-v\delta_{ij}]),$ and $\lambda[v_{ij}-v\delta_{ij}]=(\lambda_1, \cdots, \lambda_n)$ denotes
the eigenvalues of $(v_{ij}-v\delta_{ij}).$
Recall the following Lemma 27 from \cite{RWX19}.
\begin{lemm}
\label{ape-c2b-lemma}
There exist some uniformly positive constants $B, \delta, \e>0$ such that
\[h=(v-\ubar{v})+B\lt(\frac{1}{\sqrt{1-r^2}}-x_{n+1}\rt)\]
satisfying $\mathfrak{L}h\leq-a(1+\sum\limits_iF^{ii})$ in $U_{r\delta}$ and $h\geq 0$ on $\p U_{r\delta}.$
Here $a>0$ is some positive constant, $\ubar{v}=\frac{\ubar{u}^{r*}}{\sqrt{1-|\xi|^2}}$ is a subsolution,
$\mathfrak{L}f:= F^{ij}\nabla_{ij}f-f\sum\limits_iF^{ii},$ and $U_{r\delta}:=\lt\{x\in U_r\mid\frac{1}{\sqrt{1-r^2}}-x_{n+1}<\delta\rt\}.$
\end{lemm}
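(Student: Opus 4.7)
The plan is to verify the two assertions of the lemma by computing $\mathfrak{L}h$ term-by-term and then checking the boundary condition, following the barrier construction of \cite{RWX19}. The key computation is on the height function $x_{n+1}$: using the standard identity $\nabla_{ij}x_{n+1}=x_{n+1}g_{ij}$ on $\mathbb{H}^n(-1)$, in any orthonormal frame one obtains
\[\mathfrak{L}(x_{n+1})=F^{ij}x_{n+1}\delta_{ij}-x_{n+1}\sum_i F^{ii}=0.\]
Combined with $\mathfrak{L}(c)=-c\sum_i F^{ii}$ for constants, this gives
\[
\mathfrak{L}\lt(\frac{1}{\sqrt{1-r^2}}-x_{n+1}\rt)=-\frac{1}{\sqrt{1-r^2}}\sum_i F^{ii},
\]
which, for $r\geq 1/2$, is bounded above by $-\sum_i F^{ii}$ and will supply the $-a\sum_i F^{ii}$ piece of the target estimate.

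For the other summand, write $W_{ij}=v_{ij}-v\delta_{ij}$ and $\underline{W}_{ij}=\ubar{v}_{ij}-\ubar{v}\delta_{ij}$; Euler's homogeneity identity gives $\mathfrak{L}(v-\ubar{v})=F^{ij}(W)(W_{ij}-\underline{W}_{ij})$. The concavity of $F^{1/k}=(\s_n/\s_{n-k})^{1/k}$ on the positive cone then yields $F^{ij}(W)\underline{W}_{ij}\geq kF(W)^{(k-1)/k}F(\underline{W})^{1/k}$, and choosing $\ubar{u}^{r*}$ to strictly oversolve the curvature equation by a fixed multiplicative factor $1+\e$ (a small shrinking of the subsolution built in Lemma \ref{c1-sk-lemma}) produces $F(\underline{W})\geq F(W)(1+\e)$ in $U_{r\delta}$. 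Since $v$, and therefore $F(W)=(-v)^{-\al}$, is bounded away from zero in $U_{r\delta}$ by Lemma \ref{c0-sk-lemma}, this in turn yields
\[
\mathfrak{L}(v-\ubar{v})\leq kF(W)-kF(W)^{(k-1)/k}F(\underline{W})^{1/k}\leq -\e F(W)\leq -a_0
\]
for a uniformly positive $a_0>0$.

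Combining the two pieces, $\mathfrak{L}h\leq -a_0-(B/\sqrt{1-r^2})\sum_i F^{ii}\leq -\min(a_0,B)(1+\sum_i F^{ii})$, giving the required bound with $a=\min(a_0,B)$. For the boundary inequality, $\p U_{r\delta}$ splits into the portion on $\p U_r$ (where $v=\ubar{v}$ by the matching boundary data in \eqref{ape0.1} and $x_{n+1}=1/\sqrt{1-r^2}$, so that $h=0$) and the portion on $\{1/\sqrt{1-r^2}-x_{n+1}=\delta\}$, where Lemma \ref{c0-sk-lemma} bounds $|v-\ubar{v}|\leq M$; choosing $B\geq M/\delta$ then forces $h\geq B\delta-M\geq 0$.

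The main obstacle is producing the strict subsolution bound $F(\underline{W})\geq F(W)(1+\e)$, because the right-hand side $(-v)^{-\al}$ of the equation is no longer uniformly bounded as $|\xi|\to 1$, in sharp contrast with the classical CNS setting in \cite{CNS3}. This is precisely where the parameter $\e$ in the lemma enters, and it requires a careful choice of $\ubar{u}^{r*}$ using the ingredients developed in Section \ref{gcf} to retain a uniform gap between $F(\underline{W})$ and $F(W)$ throughout $U_{r\delta}$.
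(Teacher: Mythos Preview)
The paper does not prove this lemma; it simply quotes Lemma 27 from \cite{RWX19}. Your argument follows the standard barrier construction and is essentially what that reference contains: the two key inputs are $\mathfrak{L}x_{n+1}=0$ on $\mathbb{H}^n(-1)$ (from $\nabla_{ij}x_{n+1}=x_{n+1}\delta_{ij}$) together with the concavity of $(\s_n/\s_{n-k})^{1/k}$ applied to the pair $(W,\underline{W})$, which combine exactly as you wrote.

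However, your final paragraph manufactures a difficulty that is not there. The lemma is stated and used only for the approximate problem \eqref{skc0.2} on $B_r$ with $r<1$ \emph{fixed}; throughout $U_r$ one has $|\xi|\leq r<1$, so $(-v)^{-\al}$ is bounded above and below by constants depending on $r$, which is permitted here. The strict gap $F(\underline{W})\geq(1+\e)F(W)$ requires no modification of the CNS subsolution $\ubar{u}^{r*}$ and no input from Section \ref{gcf}: since $-u^{r*}\geq C_0$ one has $F(W)=(-v)^{-\al}\leq C_0^{-\al}(1-|\xi|^2)^{\al/2}$, while $F(\underline{W})\geq C_0^{-\al}$ by construction; in $U_{r\delta}$ the condition $\frac{1}{\sqrt{1-r^2}}-x_{n+1}<\delta$ forces $|\xi|\geq r_0(r,\delta)>0$, so the ratio is at least $(1-r_0^2)^{-\al/2}>1$. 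The $r$-independent local estimates needed for the limit $r\to 1$ are obtained separately in Subsection \ref{lae}, not here.

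Two incidental simplifications: since the comparison principle gives $v\geq\ubar{v}$ in $U_r$ and $x_{n+1}\leq 1/\sqrt{1-r^2}$ there, one has $h\geq 0$ on all of $\bar{U}_r$, so the boundary step on $\{1/\sqrt{1-r^2}-x_{n+1}=\delta\}$ is immediate; and $1/\sqrt{1-r^2}\geq 1$ for every $r\in(0,1)$, so the restriction $r\geq 1/2$ is unnecessary.
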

Following the argument in \cite{Guan99}, we obtain a $C^2$ boundary estimate for $u^{r*}.$ So far, we have obtained the $C^0,$ $C^1$, and $C^2$ boundary
estimates for the solution of \eqref{skc0.2}. To prove the solvability of \eqref{skc0.2}, we only need to
obtain the $C^2$ global estimates. We consider
\be\label{ape0.2}
\hat{F}=\lt(\frac{\s_n}{\s_{n-k}}\rt)^{\frac{1}{k}}(\Lambda_{ij})=(-v)^{-\frac{\al}{k}}:=\tilde{\Psi},
\ee
where $\Lambda_{ij}=v_{ij}-v\delta_{ij}.$
\begin{lemm}
\label{ape-c2g-lem}
Let $v$ be the solution of \eqref{ape0.2} in a bounded domain $U\subset\mathbb{H}^n.$ Denote the eigenvalues of $(v_{ij}-v\delta_{ij})$ by
$\lambda[v_{ij}-v\delta_{ij}]=(\lambda_1, \cdots, \lambda_n).$ Then
\[\lambda_{\max}\leq\max\{C, \lambda|_{\p U}\},\] and $C$ is a positive constant only depending on $U$ and $\tilde{\Psi}.$
\end{lemm}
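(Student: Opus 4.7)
The plan is to establish the interior upper bound on $\la_{\max}$ by a maximum-principle argument applied to a test function combining $\log\la_{\max}(\Lambda)$ with barriers depending on $v$ and $|\nabla v|^2$. Concretely, I would consider
\[
W(x) = \log \Lambda_{11}(x) + \psi\bigl(v(x)\bigr) + b|\nabla v|^2(x),
\]
where $\Lambda_{11}$ is understood as $\la_{\max}$ via a rotating orthonormal frame (handling multiplicity by the standard perturbation to $\Lambda_{11}+\e\sum_{i\ge 2}\Lambda_{ii}/(i-1)$), and the constants $\psi$, $b$ are to be chosen. If the maximum of $W$ over $\bar U$ is attained on $\p U$ there is nothing to prove; otherwise suppose it is attained at an interior point $x_0\in U$, with a local frame diagonalizing $\Lambda_{ij}(x_0)$. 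At $x_0$ the first-order conditions give $\Lambda_{11,i}/\Lambda_{11}+\psi'(v)v_i+2b\sum_k v_kv_{ki}=0$, and the second-order Hessian condition contracted with $\hat F^{ii}(\Lambda)$ gives
\[
0\geq \hat F^{ii}\lt[\frac{\Lambda_{11,ii}}{\Lambda_{11}}-\lt(\frac{\Lambda_{11,i}}{\Lambda_{11}}\rt)^2\rt]+\psi'(v)\hat F^{ii}v_{ii}+\psi''(v)\hat F^{ii}v_i^2+(\text{terms from }b).
\]

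Next, I would differentiate $\hat F(\Lambda)=\tilde\Psi$ twice along $e_1$, convert $v_{11ii}$ to $v_{ii11}$ via the commutator identity in $\mathbb{H}^n(-1)$ (which introduces corrections linear in $\Lambda_{11}-\Lambda_{ii}$ and $v_{ii}$), and substitute into the inequality above. The concavity of $(\s_n/\s_{n-k})^{1/k}$ on the positive cone provides $\hat F^{pq,rs}\Lambda_{pq,1}\Lambda_{rs,1}\leq 0$, and the Lin--Trudinger-type refinement
\[
-\hat F^{pq,rs}\Lambda_{pq,1}\Lambda_{rs,1}\geq 2\sum_{i\neq 1}\frac{\hat F^{ii}-\hat F^{11}}{\Lambda_{11}-\Lambda_{ii}}\Lambda_{11,i}^2
\]
absorbs a portion of the cubic term against $\hat F^{ii}(\Lambda_{11,i}/\Lambda_{11})^2$. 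Substituting the first-order condition converts the remaining third-order expressions into controllable terms in $\hat F^{ii}v_i^2$ and $\hat F^{ii}v_{ii}^2$. Since the $C^0$ and $C^1$ bounds from Lemmas \ref{c0-sk-lemma} and \ref{c1-sk-lemma} keep $-v$ and $|\nabla v|$ bounded on $\bar U$, a choice such as $\psi(v)=-N\log(-v)$ with $N$ large makes $\psi''(v)\hat F^{ii}v_i^2$ coercively positive whenever $v_i\neq 0$; the $b$-term handles the complementary case. Using homogeneity $\hat F^{ii}\Lambda_{ii}=\hat F=\tilde\Psi$ to dispose of the good $\hat F^{ii}\Lambda_{ii}$ terms from the commutator correction, and Maclaurin's inequality for the quotient to obtain $\sum_i\hat F^{ii}\geq c_0>0$, one arrives at an inequality of the form
\[
0\geq c_0\Lambda_{11}\sum_i\hat F^{ii}-C_1\sum_i\hat F^{ii}-C_2,
\]
which forces $\Lambda_{11}(x_0)\leq C(U,\tilde\Psi)$ and closes the argument.

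The main obstacle is the interaction between the hyperbolic commutator corrections $c(\Lambda_{11}-\Lambda_{ii})$ and the concavity inequality for the quotient operator: when an eigenvalue $\Lambda_{ii}$ is comparable to $\Lambda_{11}$ the curvature correction is small but the Lin--Trudinger denominator $\Lambda_{11}-\Lambda_{ii}$ shrinks, while when $\Lambda_{ii}\ll\Lambda_{11}$ the curvature correction is large but the concavity gives a strong gain. Partitioning indices into ``good'' and ``bad'' according to the ratio $\Lambda_{ii}/\Lambda_{11}$ and estimating each regime separately is the technical crux, and balancing $N$ and $b$ so that the coercive $\psi''$-term dominates the contributions of $\p_1\tilde\Psi$ and $\p_{11}\tilde\Psi$ (which scale as $v_1$ and $v_{11}$ respectively) while not spoiling the positivity of the principal term is what determines the final constant $C$.
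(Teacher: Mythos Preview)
Your outline follows the standard maximum-principle template, but the execution has two concrete gaps that would prevent the argument from closing.

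\medskip

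\textbf{Wrong sign from the barrier.} After the commutation $\Lambda_{11ii}=\Lambda_{ii11}+\Lambda_{ii}-\Lambda_{11}$ and division by $\Lambda_{11}$, the dominant bad term is $-\sum_i\hat F^{ii}$. Your barrier contributes $\psi'(v)\hat F^{ii}v_{ii}=\psi'(v)\bigl(\tilde\Psi+v\sum_i\hat F^{ii}\bigr)$, so the relevant piece is $\psi'(v)\,v\sum_i\hat F^{ii}$. With $\psi(v)=-N\log(-v)$ one has $\psi'(v)=N/(-v)>0$ and hence $\psi'(v)\,v=-N<0$: this \emph{adds} $-N\sum_i\hat F^{ii}$ to the bad term rather than cancelling it. The $\psi''$ term you emphasize is only of size $C\sum_i\hat F^{ii}$ and cannot rescue this. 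The paper instead uses the height function $Nx_{n+1}$, whose hyperbolic Hessian is $x_{n+1}\delta_{ij}$; this produces $+Nx_{n+1}\sum_i\hat F^{ii}$ with $x_{n+1}\geq 1$, so $N=2$ already dominates the commutator contribution.

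\medskip

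\textbf{The claimed coercive term does not arise.} There is no mechanism in your setup that yields $c_0\Lambda_{11}\sum_i\hat F^{ii}$; neither the commutator, nor $\psi(v)$, nor $b|\nabla v|^2$ (which gives $2b\hat F^{ii}(\Lambda_{ii}+v)^2$, hence at best $\hat F^{ii}\Lambda_{ii}^2$) produces it. In the paper the coercivity comes from the structure of $\tilde\Psi=(-v)^{-\alpha/k}$: since $v_{11}=\lambda_1+v$, one has $\tilde\Psi_{11}\geq \tfrac{\alpha}{k}(-v)^{-\alpha/k-1}\lambda_1-C$, so $\tilde\Psi_{11}/\Lambda_{11}\geq C_1>0$. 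Combined with $(Nx_{n+1}-1)\sum_i\hat F^{ii}\geq 0$ and $\hat F^{11}\leq C/\lambda_1$, this yields $0\geq C_1-C/\lambda_1$, a contradiction for $\lambda_1$ large.

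\medskip

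Finally, your proposed good/bad-index partition is unnecessary for the operator $(\sigma_n/\sigma_{n-k})^{1/k}$. The paper computes explicitly that
\[
\frac{\hat F^{ii}-\hat F^{11}}{\lambda_1-\lambda_i}-\frac{\hat F^{ii}}{\lambda_1}
=\frac{1}{k\hat F^{k-1}}\cdot\frac{\sigma_{n-1}(\lambda|1)\,\sigma_{n-k-1}(\lambda|1i)}{\sigma_{n-k}^2}>0
\]
for every $i\geq 2$, so the Andrews inequality absorbs \emph{all} of $\sum_{i\geq 2}\hat F^{ii}(\Lambda_{11i}/\Lambda_{11})^2$; only the $i=1$ term survives, and the first-order condition turns it into $\hat F^{11}N^2(x_{n+1})_1^2=O(1/\lambda_1)$. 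This inverse-concavity identity is the structural simplification you are missing.
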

\begin{proof}The proof of this Lemma is a modification of the proof of Lemma 18 in \cite{WX20}.
Set $M=\max\limits_{p\in\bar{U}}\max\limits_{|\xi|=1, \xi\in T_p\mathbb{H}^n}(\log\Lambda_{\xi\xi})+Nx_{n+1},$
where $x_{n+1}$ is the coordinate function. Without loss of generality, we may assume $M$ is achieved at an interior point $p_0\in U$ for some direction
$\xi_0.$ Choose an orthonormal frame $\{e_1, \cdots, e_n\}$ around $p_0$ such that $e_1(p_0)=\xi_0$ and $\Lambda_{ij}(p_0)=\lambda_i\delta_{ij}.$
Now, lets consider the test function
\[\phi=\log\Lambda_{11}+Nx_{n+1}.\]
At its maximum point $p_0,$ we have
\be\label{ape0.3}
0=\phi_i=\frac{\Lambda_{11i}}{\Lambda_{11}}+N(x_{n+1})_i,
\ee
and
\be\label{ape0.4}
0\geq\hat{F}^{ii}\phi_{ii}=\frac{\hat{F}^{ii}\Lambda_{11ii}}{\Lambda_{11}}-\hat{F}^{ii}\lt(\frac{\Lambda_{11i}}{\Lambda_{11}}\rt)^2+N(x_{n+1})\sum\limits_i\hat{F}^{ii}
\ee
Since $\Lambda_{11ii}=\Lambda_{ii11}+\Lambda_{ii}-\Lambda_{11}$ and
\[\hat{F}_{11}=\hat{F}^{ii}\Lambda_{ii11}+\hat{F}^{pq, rs}\Lambda_{pq1}\Lambda_{rs1}=\tilde{\Psi}_{11},\]
we get
\be\label{ape0.5}
\begin{aligned}
\hat{F}^{ii}\Lambda_{11ii}&=\hat{F}^{ii}\Lambda_{ii11}+\tilde{\Psi}-\Lambda_{11}\sum\limits_i\hat{F}^{ii}\\
&=\tilde{\Psi}_{11}-\hat{F}^{pp, qq}\Lambda_{pp1}\Lambda_{qq1}-\sum\limits_{p\neq q}\frac{\hat{F}^{pp}-\hat{F}^{qq}}{\lambda_p-\lambda_q}\Lambda^2_{pq1}
+\tilde{\Psi}-\Lambda_{11}\sum\limits_i\hat{F}^{ii}.
\end{aligned}
\ee
Since $\hat{F}$ is concave, combining \eqref{ape0.5} and \eqref{ape0.4} we have
\be\label{ape0.6}
\begin{aligned}
0&\geq\frac{1}{\Lambda_{11}}
\lt\{\tilde{\Psi}_{11}+2\sum\limits_{i\geq 2}\frac{\hat{F}^{ii}-\hat{F}^{11}}{\lambda_1-\lambda_i}\Lambda_{11i}^2+\tilde{\Psi}-\Lambda_{11}\sum\limits_i\hat{F}^{ii}\rt\}\\
&-\frac{\hat{F}^{ii}\Lambda^2_{11i}}{\Lambda_{11}^2}+Nx_{n+1}\sum\limits_i\hat{F}^{ii}.
\end{aligned}
\ee
We need an explicit expression of $\hat{F}^{ii}.$ A straightforward calculation gives
\[k\hat{F}^{k-1}\hat{F}^{ii}=\frac{\s_{n-1}(\la|i)}{\s_{n-k}}-\frac{\s_n}{\s_{n-k}^2}\s_{n-k-1}(\la|i).\]
Since
\[
\begin{aligned}
&\s_{n-1}(\la|i)\s_{n-k}-\s_n\s_{n-k-1}(\la|i)\\
&=\s_{n-1}(\la|i)[\la_i\s_{n-k-1}(\la|i)+\s_{n-k}(\la|i)]-\s_n\s_{n-k-1}(\la|i)\\
&=\s_{n-1}(\la|i)\s_{n-k}(\la|i),
\end{aligned}
\]
we get $$k\hat{F}^{k-1}\hat{F}^{ii}=\frac{\s_{n-1}(\la|i)\s_{n-k}(\ka|i)}{\s^2_{n-k}}.$$
Therefore, we have
\[
\begin{aligned}
&k\hat{F}^{k-1}(\hat{F}^{ii}-\hat{F}^{11})\\
&=\frac{1}{\s^2_{n-k}}[\s_{n-1}(\lambda|i)\s_{n-k}(\lambda|i)-\s_{n-1}(\lambda|1)\s_{n-k}(\lambda|1)]\\
&=\frac{1}{\s^2_{n-k}}[\s_{n-2}(\lambda|i1)\lambda_1\s_{n-k}(\lambda|i)-\s_{n-2}(\lambda|1i)\lambda_i\s_{n-k}(\lambda|1)]\\
&=\frac{\s_{n-2}(\lambda|1i)}{\s^2_{n-k}}[\lambda_1\s_{n-k}(\lambda|i)-\lambda_i\s_{n-k}(\lambda|1)]\\
&=\frac{\s_{n-2}(\lambda|1i)(\lambda_1-\lambda_i)}{\s^2_{n-k}}[(\lambda_1+\lambda_i)\s_{n-k-1}(\lambda|1i)+\s_{n-k}(\lambda|1i)]
\end{aligned}
\]
When $i\geq 2$ we can see that
\[
\begin{aligned}
&k\hat{F}^{k-1}\lt(\frac{\hat{F}^{ii}-\hat{F}^{11}}{\lambda_1-\lambda_i}-\frac{\hat{F}^{ii}}{\lambda_1}\rt)\\
&=\frac{\s_{n-2}(\lambda|1i)}{\s^2_{n-k}}[(\lambda_1+\lambda_i)\s_{n-k-1}(\lambda|1i)+\s_{n-k}(\lambda|1i)-\s_{n-k}(\lambda|i)]\\
&=\frac{\s_{n-1}(\lambda|1)}{\s^2_{n-k}}\s_{n-k-1}(\lambda|1i)>0
\end{aligned}
\]
Thus, \eqref{ape0.6} can be reduced to
\be\label{ape0.7}
\begin{aligned}
0&\geq\frac{1}{\Lambda_{11}}\tilde{\Psi}_{11}+(Nx_{n+1}-1)\sum\limits_i\hat{F}^{ii}-\frac{\hat{F}^{11}\Lambda^2_{111}}{\Lambda^2_{11}}\\
&=\frac{\tilde{\Psi}_{11}}{\Lambda_{11}}+(Nx_{n+1}-1)\sum\limits_i\hat{F}^{ii}-\hat{F}^{11}N^2(x_{n+1})^2_1.
\end{aligned}
\ee
Since $\tilde{\Psi}=(-v)^{-\frac{\al}{k}}$ and $-v=\frac{|u^*|}{\sqrt{1-|\xi|^2}}>\min\limits_{\xi\in \bar{B}_r}|u^*|>\lt|\max\limits_{\xi\in\p B_1}\varphi^*\rt|>0,$ a direct calculation yields
\[
\begin{aligned}
\tilde{\Psi}_{11}&=\frac{\al}{k}\lt(\frac{\al}{k}+1\rt)(-v)^{-\frac{\al}{k}-2}v^2_1+\frac{\al}{k}(-v)^{-\frac{\al}{k}-1}v_{11}\\
&\geq\frac{\al}{k}(-v)^{-\frac{\al}{k}-1}(\lambda_1+v)\\
&\geq C_1\lambda_1- C_2.
\end{aligned}
\]
Here, $C_1$ depends on $U$, since $-v\leq\frac{C}{\sqrt{1-|\xi|^2}}$. Plugging the above inequality into \eqref{ape0.7} we obtain
\[0\geq C_1-\frac{C_2}{\lambda_1}+(Nx_{n+1}-1)\sum\limits_i\hat{F}^{ii}-N^2(x_{n+1})^2_1\frac{C_3}{\lambda_1}.\]
Here we have used
\[k\hat{F}^{k-1}\hat{F}^{11}=\frac{\s_n\s_{n-k}(\la|1)}{\la_1\s_{n-k}^2}<\frac{1}{\la_1}\hat{F}^k\leq\frac{C_3}{\lambda_1},\]
where $C_3$ depends on $U$. Let $N=2$ we can see that when $\lambda_1$ is large, we get an contradiction. This completes the proof of Lemma \ref{ape-c2g-lem}.
\end{proof}
Therefore, we conclude that the approximate problem \eqref{skc0.2} is solvable.

\subsection{Local a priori estimates}
Let $u^{r*}$ be the solution of \eqref{skc0.2}, $u_r$ be the Legendre transform of $u^{r*}.$ In this section, we will study interior estimates of $u_r,$ which will enable us
to show there exists a subsequence of $\{{u_r\}}$ that converges to the desired entire solution $u$ of \eqref{se1}.
\label{lae}
\begin{lemm}(Lemma 5.1 of \cite{BP})
\label{lae-lem0}
Let $\Omega\subset\R^n$ be a bounded open set. Let $u, \bar{u}, \Psi: \Omega\goto\R^n$ be strictly spacelike. Assume that $u$ is strictly convex and
$u<\bar{u}$ in $\Omega.$ Also assume that near $\partial\Omega,$ we have $\Psi>\bar{u}.$ Consider the set where $u>\Psi.$ For every $x$ in this set, we have the following
gradient estimate for $u$:
\[\frac{1}{\sqrt{1-|Du|^2}}\leq\frac{1}{u(x)-\Psi(x)}\cdot\sup\limits_{\{u>\Psi\}}\frac{\bar{u}-\Psi}{\sqrt{1-|D\Psi|^2}}.\]
\end{lemm}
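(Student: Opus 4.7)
The plan is to run an Alexandrov-style tangent plane comparison, using convexity of $u$ to replace it by its supporting hyperplane and then locating an interior critical point where the gradient of $\Psi$ matches that of $u$. Concretely, I would fix $x_0\in\{u>\Psi\}$ and introduce the supporting affine function $L(x):=u(x_0)+Du(x_0)\cdot(x-x_0)$. Strict convexity of $u$ forces $L\leq u$ throughout $\Omega$, and combined with the hypothesis $u<\bar u$ this yields $L<\bar u$ on all of $\Omega$.

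Next I would consider the auxiliary function $\phi:=L-\Psi$ on $\overline{\Omega}$. By construction $\phi(x_0)=u(x_0)-\Psi(x_0)>0$, while in a neighborhood of $\partial\Omega$ the standing assumption $\Psi>\bar u$ combines with $L\leq\bar u$ to force $\phi<0$. Continuity therefore delivers an interior maximizer $x_1\in\Omega$ with $\phi(x_1)\geq\phi(x_0)>0$.

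The information at $x_1$ is what closes the estimate. The first-order condition $D\phi(x_1)=0$ reads $Du(x_0)=D\Psi(x_1)$, so in particular $|Du(x_0)|=|D\Psi(x_1)|$. The comparison $\phi(x_1)\geq\phi(x_0)$ together with $L(x_1)\leq u(x_1)\leq\bar u(x_1)$ then yields
\[
u(x_0)-\Psi(x_0)\;\leq\;L(x_1)-\Psi(x_1)\;\leq\;\bar u(x_1)-\Psi(x_1).
\]
Moreover $\phi(x_1)>0$ and $L\leq u$ force $u(x_1)>\Psi(x_1)$, so $x_1\in\{u>\Psi\}$; dividing the above display by the common quantity $\sqrt{1-|Du(x_0)|^2}=\sqrt{1-|D\Psi(x_1)|^2}$ and taking the supremum over $\{u>\Psi\}$ then produces exactly the claimed inequality.

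There is no serious analytic obstacle: the argument is essentially a piece of convex analysis, and the spacelike hypothesis enters only at the final step, to guarantee that the denominators $\sqrt{1-|Du|^2}$ and $\sqrt{1-|D\Psi|^2}$ are positive so that the claimed bound is meaningful. The only small point requiring care is verifying that $\phi<0$ in an open neighborhood of $\partial\Omega$ so that the maximum of $\phi$ is genuinely interior, but this follows verbatim from $L\leq\bar u<\Psi$ near the boundary.
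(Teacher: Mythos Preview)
Your argument is correct and is essentially the original proof from Bayard--Schn\"urer; the present paper does not supply its own proof but merely cites \cite{BP}. The supporting hyperplane $L$ of $u$ at $x_0$, the auxiliary function $\phi=L-\Psi$, and the interior maximum $x_1$ where $D\Psi(x_1)=Du(x_0)$ are exactly the ingredients used in \cite{BP}, and your chain of inequalities $u(x_0)-\Psi(x_0)\leq L(x_1)-\Psi(x_1)\leq \bar u(x_1)-\Psi(x_1)$ together with the gradient match is the standard route to the estimate.

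One minor remark: you invoke ``strict convexity'' to get $L\leq u$, but ordinary convexity already suffices for that inequality; strict convexity is not actually used anywhere in the argument, and indeed the Bayard--Schn\"urer proof only needs convexity. Also, since $\Psi$ is only assumed defined on the open set $\Omega$, you should phrase the maximization of $\phi$ over $\Omega$ rather than $\overline\Omega$; your own final paragraph already contains the fix, namely that $\phi<0$ in a neighborhood of $\partial\Omega$ forces the supremum to be attained on a compact subset of $\Omega$.
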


\subsubsection{Construction of $\Psi$} In order to obtain the local $C^1$ estimate, we introduce a new subsolution $\ubar{u}_1$ of \eqref{se1},
where $\ubar{u}_1$ satisfies
\[\s_n(\ka[\M_{\ubar{u}_1}])=100\lt(-\lt<X,\nu\rt>\rt)^{\frac{\al n}{k}},\] and
\[\ubar{u}_1(x)-|x|\goto\varphi\lt(\frac{x}{|x|}\rt)\,\,\mbox{as $|x|\goto\infty.$}\]
\begin{lemm}
\label{lae-lem1} Let $\ubar{u}$ be a solution of
\[\s_n(\ka[\M_{u}])=\frac{1}{{n\choose k}^{\frac{n}{k}}}\lt(-\lt<X, \nu\rt>\rt)^{\frac{\al n}{k}}\]
satisfying $\ubar{u}(x)-|x|\goto\varphi\lt(\frac{x}{|x|}\rt)$ as $|x|\goto\infty,$
then $\ubar{u}_1<\ubar{u}.$
\end{lemm}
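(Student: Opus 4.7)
The plan is to argue by contradiction using a comparison at an interior maximum of $\ubar{u}_1-\ubar{u}$. Suppose $\sup_{\R^n}(\ubar{u}_1-\ubar{u})\geq 0$. Since $\ubar{u}_1$ and $\ubar{u}$ share the same asymptotics $u(x)-|x|\goto\varphi(x/|x|)$ as $|x|\goto\infty$, their difference tends to $0$ at infinity, so any nonnegative supremum must be attained at some finite interior point $x_0\in\R^n$.

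At such a maximum $x_0$, the first-order condition $D\ubar{u}_1(x_0)=D\ubar{u}(x_0)$ means the two spacelike graphs share the same point $X_0$, the same downward timelike unit normal $\nu_0$, and the same induced metric at $X_0$; in particular the value of $-\lt<X_0,\nu_0\rt>$ agrees for the two surfaces and is strictly positive (both graphs lie above the light cone). The second-order condition $0<D^2\ubar{u}_1(x_0)\leq D^2\ubar{u}(x_0)$, combined with the representation
\[
\s_n(\ka[\M_u])=\frac{\det(D^2u)}{(1-|Du|^2)^{(n+2)/2}}
\]
for spacelike convex graphs, and the monotonicity of $\det$ on positive-definite matrices, then yields the curvature comparison $\s_n(\ka[\M_{\ubar{u}_1}])(X_0)\leq\s_n(\ka[\M_{\ubar{u}}])(X_0)$.

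Substituting the defining equations of $\ubar{u}_1$ and $\ubar{u}$ into this inequality at $X_0$ and cancelling the positive factor $(-\lt<X_0,\nu_0\rt>)^{\al n/k}$ reduces it to $100\leq\frac{1}{{n\choose k}^{n/k}}\leq 1$, which is absurd; hence $\ubar{u}_1\leq\ubar{u}$. The strict inequality comes for free: if $\ubar{u}_1(x_0)=\ubar{u}(x_0)$ at some point, then $x_0$ is automatically a maximum point of $\ubar{u}_1-\ubar{u}$ and the same argument applies there.

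The only nontrivial point is ensuring that the supremum of $\ubar{u}_1-\ubar{u}$ is realised at a finite interior point rather than escaping to infinity; this is precisely what the matching asymptotic condition for both $\ubar{u}_1$ and $\ubar{u}$ guarantees, and it is where the hypothesis on $\varphi$ is essential. The remaining ingredients -- reducing the curvature inequality to a Hessian-determinant inequality using the shared gradient at $x_0$, and the elementary fact ${n\choose k}\geq 1$ for $0\leq k\leq n$ -- are routine.
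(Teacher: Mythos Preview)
Your approach---comparing $\ubar{u}_1$ and $\ubar{u}$ directly on $\R^n$ via an interior maximum of their difference---is different from the paper's. The paper passes to the Legendre transforms $\lus_1,\lus$ on the unit ball, where both satisfy Monge--Amp\`ere equations with the \emph{same} boundary datum $\varphi^*$ on $\p B_1$; the comparison principle on that bounded domain gives $\lus_1>\lus$, and a lemma from \cite{WX20} converts this back to $\ubar{u}_1<\ubar{u}$. The Legendre route avoids any discussion of attainment at infinity, while your route trades that for the asymptotic condition; both are legitimate.

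There is, however, a slip in your argument. At the interior maximum $x_0$ the first-order condition $D\ubar{u}_1(x_0)=D\ubar{u}(x_0)$ does equate the unit normals and the induced metrics, but it does \emph{not} make the two graphs pass through the same spacetime point, and the support function
\[
-\lt<X,\nu\rt>=\frac{u(x_0)-x_0\cdot Du(x_0)}{\sqrt{1-|Du(x_0)|^2}}
\]
depends on the value $u(x_0)$ as well as on the gradient. So your claim that ``the value of $-\lt<X_0,\nu_0\rt>$ agrees for the two surfaces'' is false whenever the maximum is strictly positive, and the cancellation you perform is not justified as stated. The repair is immediate and works in your favour: since $\ubar{u}_1(x_0)\geq\ubar{u}(x_0)$ and the gradients coincide, one has $(-\lt<X,\nu\rt>)_{\ubar{u}_1}(x_0)\geq(-\lt<X,\nu\rt>)_{\ubar{u}}(x_0)>0$, hence
\[
\s_n(\ka[\M_{\ubar{u}_1}])(x_0)=100\,(-\lt<X,\nu\rt>)_{\ubar{u}_1}^{\al n/k}\geq 100\,(-\lt<X,\nu\rt>)_{\ubar{u}}^{\al n/k}>{n\choose k}^{-n/k}(-\lt<X,\nu\rt>)_{\ubar{u}}^{\al n/k}=\s_n(\ka[\M_{\ubar{u}}])(x_0),
\]
which still contradicts the second-order inequality $\s_n(\ka[\M_{\ubar{u}_1}])(x_0)\leq\s_n(\ka[\M_{\ubar{u}}])(x_0)$. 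With this correction your proof goes through.
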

\begin{proof}
We look at the Legendre transform of $\ubar{u}_1,$ denoted by $\lus_1.$ Then $\lus_1$ satisfies
\[\s_n(w^*\gas_{ik}(\lus_1)_{kl}\gas_{lj})=\frac{1}{100}\lt(\frac{\sqrt{1-|\xi|^2}}{-\lus_1}\rt)^\frac{\al n}{k};\]
while $\lus$ satisfies
\[\s_n(w^*\gas_{ik}(\lus)_{kl}\gas_{lj})={n\choose k}^{\frac{n}{k}}\lt(\frac{\sqrt{1-|\xi|^2}}{-\lus}\rt)^\frac{\al n}{k}.\]
Moreover, $\lus_1=\lus=\varphi^*(\xi)$ on $\p B_1.$
Applying the maximal principle we conclude $\lus_1>\lus$ in $B_1.$
Following the proof of Lemma 13 of \cite{WX20} we get $\ubar{u}_1<\ubar{u}$ in $\R^n.$
\end{proof}
Now, for any compact domain $K\subset\R^n,$ let $2\delta=\min\limits_{K}(\ubar{u}-\ubar{u}_1).$ We define
$\Psi=\ubar{u}_1+\delta.$ Denote $K'=\{x\in\R^n\mid \Psi\leq\bar{u}\},$ notice that as $|x|\goto\infty,$ we have
$\ubar{u}_1-\bar{u}\goto 0,$ this implies $K'$ is compact. Applying Lemma \ref{lae-lem0}, for any $(\Omega_r, u^r),$
if $K'\subset\Omega_r$ we have
\[\sup\limits_{K}\frac{1}{\sqrt{1-|Du^r|^2}}\leq\frac{1}{\delta}\sup\limits_{K'}\frac{\bar{u}-\Psi}{\sqrt{1-|D\Psi|^2}}.\]

\subsubsection{Local $C^2$ estimates}
We will follow the proof of Lemma 24 in \cite{WX20}.
\begin{lemm}
\label{lc2lem1}
Let $u^{r*}$ be the solution of \eqref{skc0.2}, $u_r$ be the Legendre transform of $u^{r*},$ and $\Omega_r=Du^{r*}(B_r).$
For any giving $s>1,$ let $r_s>0$ be a positive number such that when $r>r_s,$ $u_r|_{\p\Omega_r}>s.$ Let $\ka_{\max}(x)$
be the largest principal curvature of $\M_{u_r}$ at $x,$ where $\M_{u_r}=\{(x, u_r(x))|x\in\Omega_r\}.$
Then, for $r>r_s$ we have
\[\max\limits_{\{x\in\Omega_r|u_r\leq s\}}(s-u_r)\ka_{\max}\leq C.\]
Here, $C$ only depends on the $C^0$ and local $C^1$ estimates of $u_r$.
\end{lemm}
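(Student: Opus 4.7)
Set $K_s := \{x \in \Omega_r : u_r(x) \leq s\}$, which is compact since $u_r$ is strictly convex and $u_r|_{\p \Omega_r} > s$ for $r > r_s$. The subsolution $\Psi = \ubar{u}_1 + \delta$ built in Subsection 3.2.1, together with Lemma~\ref{lae-lem0}, gives a uniform local gradient bound $\sup_{K_s} 1/\sqrt{1-|Du_r|^2} \leq C(s)$ depending only on $s$ and the $C^0$ data. Consequently, on $K_s$ the quantities $|u_r|$, $|Du_r|$, $w := \sqrt{1-|Du_r|^2}$, and the support function $-\lt<X, \nu\rt> = (u_r - x \cdot Du_r)/w$ are all uniformly controlled. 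This uniform local $C^1$ information on $K_s$ is the key input that turns an otherwise degenerate estimate into a genuine interior Pogorelov-type bound.

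I work intrinsically on $\M_{u_r}$, rewriting the equation as $F(\ka) = (-\lt<X, \nu\rt>)^{\al/k}$ with $F := \s_k^{1/k}$ concave on the positive cone, and consider the test function
\[
W(p, \xi) = \log h_{\xi\xi}(p) + \beta \log(s - u_r(p)) + A\psi(p)
\]
on the unit tangent bundle of $\M_{u_r}|_{K_s}$. Here $\beta, A > 0$ are constants to be chosen large, and $\psi$ is an auxiliary function of the position $X$ (for instance a quadratic in the ambient coordinates, or $-\log(-\lt<X,\nu\rt>)$), chosen so that $F^{ij}\psi_{ij}$ produces a positive multiple of $\sum_i F^{ii}$. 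Because $\log(s - u_r) \to -\infty$ as $p$ approaches $\p K_s$, the maximum of $W$ is attained at an interior point $p_0 \in K_s$ in some principal direction $\xi_0$. After rotating to an orthonormal frame $\{e_1, \dots, e_n\}$ on $T_{p_0}\M_{u_r}$ with $e_1 = \xi_0$ and $h_{ij}(p_0) = \ka_i \delta_{ij}$, $h_{11} = \ka_{\max}$, the optimality conditions are $W_i(p_0) = 0$ and $F^{ij} W_{ij}(p_0) \leq 0$.

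The three contributions to $F^{ij}W_{ij}$ are treated separately. For $\log h_{11}$, the Codazzi identity and the Gauss equation in the flat ambient $\R^{n,1}$ give $F^{ij}h_{11,ij} = F^{ij}h_{ij,11}$ plus commutator terms of definite sign; differentiating $F = (-\lt<X,\nu\rt>)^{\al/k}$ twice and discarding $F^{pq,rs}h_{pq,1}h_{rs,1} \leq 0$ (concavity), combined with the $C^1$ control of $-\lt<X,\nu\rt>$ on $K_s$, yields $F^{ij}h_{ij,11} \geq -C_1 h_{11} - C_2(1+\sum_i F^{ii})$. For $\beta\log(s - u_r)$, the graphical identity $(u_r)_{ij} = w\, h_{ij} + \text{(controlled lower order)}$ and the Euler relation $F^{ij}h_{ij} = F$ produce a leading term $-\beta k w F/(s - u_r)$, and the quadratic piece is absorbed by combining $W_i = 0$ with Cauchy--Schwarz. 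For $A\psi$, the design of $\psi$ gives $A F^{ij}\psi_{ij} \geq c_0 A \sum_i F^{ii} - C_3 A$.

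Assembling these contributions and using the Newton--Maclaurin estimate $\sum_i F^{ii} \geq c(n,k)$ together with $\sum_i F^{ii} \ka_{\max} \geq c(n,k) F$ at the max, one arrives at an inequality at $p_0$ schematically of the form
\[
0 \geq c_0 A \sum_i F^{ii} + \frac{\beta k w F}{s-u_r} - C_4 h_{11} - \frac{C_5 \beta^2}{(s - u_r)^2} - C_6 A,
\]
and choosing $A \gg \beta$ relative to the $C^0$, $C^1$ data yields $(s - u_r(p_0))\ka_{\max}(p_0) \leq C$; the claim then follows since $(s - u_r)\ka_{\max}$ is maximized on $K_s$ at $p_0$. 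The main obstacle is the bookkeeping that pins down the role of the constraint $\al \leq k$: this constraint is what keeps the right hand side $(-\lt<X,\nu\rt>)^{\al/k}$ and its first two derivatives from dominating, so that the good positive term $c_0 A \sum_i F^{ii}$ can swallow the error terms produced by the Pogorelov weight and by differentiating the self-expander right hand side. Since the proof is a direct adaptation of Lemma~24 in \cite{WX20}, the technical skeleton is already in place; the only genuinely new ingredient is the uniform local $C^1$ bound on $K_s$ extracted from the new subsolution $\ubar{u}_1$ of Lemma~\ref{lae-lem1}.
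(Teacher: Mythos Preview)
Your sketch captures the broad Pogorelov strategy, but the specific choices you make do not close the estimate, and they diverge from the paper's actual mechanism in ways that matter.

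The paper's test function is $\phi = m\log(s-u) + \log P_m - mN\lt<\nu, E\rt>$ with $P_m = \sum_j\ka_j^m$, not $\log h_{\xi\xi}$. The point of $P_m$ is to invoke the third-order inequalities of \cite{LRW16} (the $A_i + B_i + C_i + D_i - E_i$ calculus), which supply the extra positive terms $\sum_{i\geq 2}\frac{\s_k^{ii}}{P_m^2}\bigl(\sum_j\ka_j^{m-1}h_{jji}\bigr)^2$ appearing in \eqref{lae2.10}. These are exactly what absorb the third-order remnant once the bad Pogorelov term is rewritten via the critical equation; concavity of $\s_k^{1/k}$ alone, which is your substitute, does not produce them.

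More critically, your auxiliary $\psi$ is required only to satisfy $F^{ij}\psi_{ij}\geq c_0\sum_i F^{ii}$, and your final schematic carries the bad term $C_5\beta^2/(s-u_r)^2$. This term blows up at $\p K_s$ and is not controlled by $c_0 A\sum_i F^{ii}$ for any fixed $A$; the inequality you wrote simply does not bound $(s-u_r)\ka_{\max}$. The paper's auxiliary is specifically $-N\lt<\nu, E\rt>$, whose first derivative $(\lt<\nu, E\rt>)_i = h_{ii}\lt<\tau_i, E\rt>$ is \emph{proportional to $\ka_i$}. This is the crux: because of that proportionality, squaring the critical equation \eqref{lae2.2} and re-substituting yields the identity \eqref{lae2.11}, which converts $\frac{\s_k^{ii}u_i^2}{(s-u)^2}$ (for $i\geq 2$) into a third-order piece handled by \cite{LRW16} plus the terms $-N^2\s_k^{ii}\ka_i^2 u_i^2 + \frac{2N\s_k^{ii}\ka_i u_i^2}{s-u}$, which pair off against the good contribution $N\s_k^{ii}\ka_i^2(-\lt<\nu, E\rt>)$ from the second derivative of the auxiliary. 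A generic $\psi$ with $\psi_i$ not proportional to $\ka_i$ (e.g.\ a quadratic in the ambient coordinates) cannot reproduce this cancellation, and ``Cauchy--Schwarz'' does not rescue you.

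Finally, your remark that the constraint $\al \leq k$ is ``what keeps the right hand side \ldots\ from dominating'' is off the mark. In the paper's computation the two $\ka^{m+1}$ contributions combine to $(\al - k)G\,\frac{\sum_j\ka_j^{m+1}}{P_m}$, which for $\al\leq k$ is \emph{nonpositive} and is simply absorbed into the generic $-C\ka_1$ error in \eqref{lae2.10}; the lemma would go through for any $\al > 0$ once the local $C^1$ bound is in hand, and the restriction $\al\leq k$ enters only upstream, in the construction of the sub- and supersolutions.
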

\begin{proof}
Consider the test function
\be\label{lae2.1}
\phi=m\log(s-u)+\log P_m-mN\lt<\nu, E\rt>,
\ee
where $P_m=\sum\limits_j\ka_j^m,$ $E=(0, \cdots, 0, 1),$ and $N, m>0$ are some undetermined constants.
Assume that $\phi$ achieves its maximum value on $\M$ at some point $x_0.$ We may choose a local orthonormal frame $\{\tau_1, \cdots, \tau_n\}$ such that
at $x_0,$ $h_{ij}=\ka_i\delta_{ij}$ and $\ka_1\geq\ka_2\geq\cdots\geq\ka_n.$ Differentiating $\phi$ twice at $x_0$ we have
\be\label{lae2.2}
\frac{\sum\limits_j\ka_j^{m-1}h_{jji}}{P_m}-Nh_{ii}\lt<\tau_i, E\rt>+\frac{\lt<\tau_i, E\rt>}{s-u}=0,
\ee
and
\be\label{lae2.3}
\begin{aligned}
0&\geq\frac{1}{P_m}\lt[\sum\limits_j\ka_j^{m-1}h_{jjii}+(m-1)\sum\limits_j\ka_j^{m-2}h_{jji}^2+\sum\limits_{p\neq q}\frac{\ka_p^{m-1}-\ka_q^{m-1}}{\ka_p-\ka_q}h^2_{pqi}\rt]\\
&-\frac{m}{P_m^2}\lt(\sum\limits_j\ka_j^{m-1}h_{jji}\rt)^2-Nh_{iil}\lt<\tau_l, E\rt>-Nh_{ii}^2\lt<\nu, E\rt>+\frac{h_{ii}\lt<\nu, E\rt>}{s-u}-\frac{u_i^2}{(s-u)^2}.
\end{aligned}
\ee
Denote $\hat{v}=-\lt<X, \nu\rt>$ then
\[\hat{v}_j=-h_{jk}\lt<X, \tau_k\rt>=-h_{jj}\lt<X, \tau_j\rt>,\]
and
\[
\begin{aligned}
\hat{v}_{jj}&=-h_{jjk}\lt<X, \tau_k\rt>-h_{jk}\lt<\tau_j, \tau_k\rt>-h^2_{jk}\lt<X, \nu\rt>\\
&=-h_{jjk}\lt<X, \tau_k\rt>-h_{jj}-h^2_{jj}\lt<X, \nu\rt>.
\end{aligned}
\]
Since $\s_k=\hat{v}^\al:=G,$ we can see that $\s_k^{ii}h_{iij}=G_j$ and
$\s_{k}^{ii}h_{iijj}+\s_k^{pq, rs}h_{pqj}h_{rsj}=G_{jj}.$ Recall also that in Minkowski space we have
\[h_{jjii}=h_{iijj}+h^2_{ii}h_{jj}-h_{ii}h^2_{jj},\] thus \eqref{lae2.3} becomes
\be\label{lae2.4}
\begin{aligned}
0&\geq\frac{1}{P_m}\bigg[\sum\limits_j\ka_j^{m-1}\s_k^{ii}(h_{iijj}+h^2_{ii}h_{jj}-h_{ii}h^2_{jj})\\
&+(m-1)\s_k^{ii}\sum\limits_j\ka_j^{m-2}h^2_{jji}+\sum\limits_{p\neq q}\frac{\ka_p^{m-1}-\ka_q^{m-1}}{\ka_p-\ka_q}\s_k^{ii}h^2_{pqi}\bigg]\\
&-\frac{m}{P_m^2}\s_k^{ii}\lt(\sum\limits_j\ka_j^{m-1}h_{jji}\rt)^2-N\lt<\nabla G, E\rt>-N\s_k^{ii}\ka_i^2\lt<\nu, E\rt>
+\frac{kG\lt<\nu, E\rt>}{s-u}-\frac{\s_k^{ii}u_i^2}{(s-u)^2}.
\end{aligned}
\ee
This gives
\be\label{lae2.5}
\begin{aligned}
0&\geq\frac{1}{P_m}\bigg\{\sum\limits_j\ka_j^{m-1}[G_{jj}-\s_k^{pq, rs}h_{pqj}h_{rsj}-kGh^2_{jj}]\\
&+(m-1)\s_k^{ii}\sum\limits_j\ka_j^{m-2}h^2_{jji}+\sum\limits_{p\neq q}\frac{\ka_p^{m-1}-\ka_q^{m-1}}{\ka_p-\ka_q}\s_k^{ii}h^2_{pqi}\bigg\}\\
&-\frac{m}{P_m^2}\s_k^{ii}\lt(\sum\limits_j\ka_j^{m-1}h_{jji}\rt)^2-N\lt<\nabla G, E\rt>-N\s_k^{ii}\ka_i^2\lt<\nu, E\rt>
+\frac{kG\lt<\nu, E\rt>}{s-u}-\frac{\s_k^{ii}u_i^2}{(s-u)^2}.
\end{aligned}
\ee

We denote $A_i=\frac{\ka_i^{m-1}}{P_m}\lt[K(\s_k)^2_i-\sum\limits_{p, q}\s_k^{pp, qq}h_{ppi}h_{qqi}\rt],$
$B_i=2\frac{\ka_j^{m-1}}{P_m}\sum\limits_j\s_k^{jj, ii}h^2_{jji},$\\ $C_i=\frac{m-1}{P_m}\s_k^{ii}\sum\limits_j\ka_j^{m-2}h^2_{jji},$
$D_i=\frac{2\s_k^{jj}}{P_m}\sum\limits_{j\neq i}\frac{\ka_j^{m-1}-\ka_i^{m-1}}{\ka_j-\ka_i}h^2_{jji},$
and $E_i=\frac{m}{P_m^2}\s_k^{ii}\lt(\sum\limits_j\ka_j^{m-1}h_{jji}\rt)^2.$ Then \eqref{lae2.5} can be reduced to
\be\label{lae2.6}
\begin{aligned}
0&\geq\sum\limits_i(A_i+B_i+C_i+D_i-E_i)-\sum\limits_i\frac{K\ka_i^{m-1}(G_i)^2}{P_m}\\
&+\frac{\sum_j\ka_j^{m-1}G_{jj}}{P_m}-N\lt<\nabla G, E\rt>-N\s_k^{ii}\ka_i^2\lt<\nu, E\rt>\\
&-\frac{\sum_j\ka_j^{m+1}}{P_m}kG+\frac{kG\lt<\nu, E\rt>}{s-u}-\frac{\s_k^{ii}u_i^2}{(s-u)^2}.
\end{aligned}
\ee
A straightforward calculation shows
\be\label{lae2.7}
\begin{aligned}
&\frac{\sum_j\ka_j^{m-1}G_{jj}}{P_m}=\frac{\sum_j\ka_j^{m-1}[\al(\al-1)\hat{v}^{\al-2}\hat{v}^2_j+\al\hat{v}^{\al-1}\hat{v}_{jj}]}{P_m}\\
&=\al(\al-1)\hat{v}^{\al-2}\frac{\sum_j\ka_j^{m-1}\hat{v}_j^2}{P_m}+\al\hat{v}^{\al-1}\frac{\sum_j\ka_j^{m-1}
\lt(h_{jjl}\lt<-X, \tau_l\rt>-\ka_j+\ka_j^2\hat{v}\rt)}{P_m}\\
&=\al(\al-1)\hat{v}^{\al-2}\frac{\sum_j\ka_j^{m-1}\hat{v}_j^2}{P_m}+\al\hat{v}^{\al-1}\frac{\sum_j\ka_j^{m-1}h_{jjl}\lt<-X, \tau_l\rt>}{P_m}\\
&-\al\hat{v}^{\al-1}+\al\hat{v}^\al\frac{\sum_j\ka_j^{m+1}}{P_m}.
\end{aligned}
\ee
Moreover, we have
\be\label{lae2.8}
\begin{aligned}
&\frac{\al\hat{v}^{\al-1}\sum\ka_j^{m-1}h_{jjl}\lt<-X, \tau_l\rt>}{P_m}-N\lt<\nabla G, E\rt>\\
&=\frac{\al\hat{v}^{\al-1}\sum\ka_j^{m-1}h_{jjl}\lt<-X, \tau_l\rt>}{P_m}-N\al\hat{v}^{\al-1}\hat{v}_l\lt<\tau_l, E\rt>\\
&=\al\hat{v}^{\al-1}\lt(\frac{\sum\ka_j^{m-1}h_{jjl}\lt<-X, \tau_l\rt>}{P_m}-N\ka_l\lt<X, \tau_l\rt>u_l\rt)\\
&=\al\hat{v}^{\al-1}\sum\lt<X, \tau_l\rt>\lt(N\ka_lu_l-\frac{u_l}{s-u}-N\ka_lu_l\rt)\\
&=-\frac{\al\hat{v}^{\al-1}\sum\lt<X, \tau_l\rt>u_l}{s-u},
\end{aligned}
\ee
where we have used \eqref{lae2.2}.
Combing \eqref{lae2.7}, \eqref{lae2.8} with \eqref{lae2.6} we obtain
\be\label{lae2.9}
\begin{aligned}
0&\geq\sum\limits_i(A_i+B_i+C_i+D_i-E_i)-\sum\limits_i\frac{K\ka_i^{m-1}(G_i)^2}{P_m}\\
&+\al(\al-1)\hat{v}^{\al-2}\frac{\sum_j\ka_j^{m-1}\hat{v}_j^2}{P_m}-\al\hat{v}^{\al-1}+\al\hat{v}^\al\frac{\sum_j\ka_j^{m+1}}{P_m}\\
&-\frac{\al\hat{v}^{\al-1}\sum_l\lt<X, \tau_l\rt>u_l}{s-u}-N\s_k^{ii}\ka_i^2\lt<\nu, E\rt>\\
&-kG\frac{\sum_j\ka_j^{m+1}}{P_m}+\frac{kG\lt<\nu, E\rt>}{s-u}-\frac{\s_k^{ii}u_i^2}{(s-u)^2}.
\end{aligned}
\ee
Recall that
$$\langle X,X\rangle+\langle \nu, X\rangle^2=\sum_i\langle X,\tau_i\rangle^2,$$ we know
$|\langle X,\tau_i\rangle|$ can be controlled by some constants depending on $s$ and local $C^1$ estimates.
Therefore, applying Lemma 8 and 9 of \cite{LRW16} we may assume
\be\label{lae2.10}
\begin{aligned}
0&\geq-C\ka_1+\sum\limits_{i=2}^n\frac{\s_k^{ii}}{P_m^2}\lt(\sum_j\ka_j^{m-1}h_{jji}\rt)^2-\frac{C}{s-u}\\
&-N\s_k^{ii}\ka_i^2\lt<\nu, E\rt>+\frac{kG\lt<\nu, E\rt>}{s-u}-\frac{\s_k^{ii}u_i^2}{(s-u)^2}.
\end{aligned}
\ee

Now, for any fixed $i\geq 2$ by \eqref{lae2.2} we have
\be\label{lae2.11}
\begin{aligned}
&\frac{\s_k^{ii}u_i^2}{(s-u)^2}=\s_k^{ii}\lt[\frac{\sum\ka_j^{m-1}h_{jji}}{P_m}+N\ka_iu_i\rt]^2\\
&=\s_k^{ii}\lt(\frac{\sum\ka_j^{m-1}h_{jji}}{P_m}\rt)^2+2N\s_k^{ii}\ka_iu_i\lt(-N\ka_iu_i+\frac{u_i}{s-u}\rt)+N^2\s_k^{ii}\ka_i^2u_i^2\\
&=\s_k^{ii}\lt(\frac{\sum\ka_j^{m-1}h_{jji}}{P_m}\rt)^2-N^2\s_k^{ii}\ka_i^2u_i^2+2N\frac{\s_k^{ii}\ka_iu_i^2}{s-u}
\end{aligned}
\ee

Plugging \eqref{lae2.11} into \eqref{lae2.10}
we get,
\[\begin{aligned}
0&\geq-C\ka_1-\frac{C}{s-u}-N\s_k^{ii}\ka_i^2\lt<\nu, E\rt>+\frac{kG\lt<\nu, E\rt>}{s-u}\\
&-\frac{\s_k^{11}u_1^2}{(s-u)^2}+\sum\limits_{i=2}^nN^2\s_k^{ii}\ka_i^2u_i^2-2N\sum\limits_{i=2}^n\frac{\s_k^{ii}\ka_iu_i^2}{s-u}
\end{aligned}\]
Since there is some constant $c_0$ such that $\s_k^{11}\ka_1\geq c_0>0,$ we have
\[\begin{aligned}
0&\geq\lt(-\frac{c_0N\lt<\nu, E\rt>}{2}-C\rt)\ka_1-\frac{N}{2}\s_k^{11}\ka_1^2\lt<\nu, E\rt>\\
&-\sum\limits_{i=2}^n\frac{2N\s_ku_i^2}{s-u}+\frac{kG\lt<\nu, E\rt>-C}{s-u}-\frac{\s_k^{11}u_1^2}{(s-u)^2},
\end{aligned}\]
where we have used for any $1\leq i\leq n$ (no summation), $\s_k=\s_k^{ii}\ka_i+\s_k(\ka|i)\geq\s_k^{ii}\ka_i.$
Moreover, it's clear that
\[\sum\limits_{i=2}^nu_i^2=\sum\limits_{i=2}^n\lt<\tau_i, E\rt>^2<\frac{1}{1-|Du|^2}=\lt<\nu, E\rt>^2.\]
We conclude
\[\lt(\frac{2NC}{s-u}+\frac{\s_k^{11}}{(s-u)^2}\rt)\lt<\nu, E\rt>^2\geq\frac{Nc_0\ka_1}{4}\lt<-\nu, E\rt>+\frac{N}{2}\s_k^{11}\ka_1^2\lt<-\nu, E\rt>.\]
This implies $(s-u)\ka_1\leq C,$ where $C$ depends on $s$ and local $C^1$ estimates. Therefore, we obtain the desired Pogorelov type $C^2$ local estimates.
\end{proof}
Following the argument in subsection 6.4 of \cite{WX20}, we prove Theorem \ref{int-thm2}.


\begin{thebibliography}{99}
\bibitem{AW94}
Altschuler, Steven J.; Wu, Lang F.
{\em Translating surfaces of the non-parametric mean curvature flow with prescribed contact angle.}
Calc. Var. Partial Differential Equations 2 (1994), no. 1, 101-111.

\bibitem{BP}
Bayard, Pierre; Schn\"{u}rer, Oliver C.
{Entire spacelike hypersurfaces of constant Gauß curvature in Minkowski space.}
J. Reine Angew. Math. 627 (2009), 1-29.

\bibitem{CDL21}
Choi, Kyeongsu; Daskalopoulos, Panagiota; Lee, Ki-Ahm
{\em Translating solutions to the Gauss curvature flow with flat sides.}
Anal. PDE 14 (2021), no. 2, 595-616.

\bibitem{CNS1}
Caffarelli, L.; Nirenberg, L.; Spruck, J.
{\em The Dirichlet problem for nonlinear second-order elliptic equations. I. Monge-Ampère equation.}
Comm. Pure Appl. Math. 37 (1984), no. 3, 369-402.

\bibitem{CNS3}
Caffarelli, L.; Nirenberg, L.; Spruck, J.
{\em The Dirichlet problem for nonlinear second-order elliptic equations. III. Functions of the eigenvalues of the Hessian.}
Acta Math. 155 (1985), no. 3-4, 261-301.

\bibitem{CSS07}
Clutterbuck, Julie; Schn\"{u}rer, Oliver C.; Schulze, Felix
{\em Stability of translating solutions to mean curvature flow.}
Calc. Var. Partial Differential Equations 29 (2007), no. 3, 281-293.

\bibitem{Guan99}
Guan, Bo
{\em The Dirichlet problem for Hessian equations on Riemannian manifolds.}
Calc. Var. Partial Differential Equations 8 (1999), no. 1, 45-69.

\bibitem{GJS}
Guan, Bo; Jian, Huai-Yu; Schoen, Richard M.
{Entire spacelike hypersurfaces of prescribed Gauss curvature in Minkowski space.}
J. Reine Angew. Math. 595 (2006), 167-188.

\bibitem{HIMW19}
Hoffman, D.; Ilmanen, T.; Martín, F.; White, B.
{\em Graphical translators for mean curvature flow.}
Calc. Var. Partial Differential Equations 58 (2019), no. 4, Paper No. 117, 29 pp.

\bibitem{AML}
Li, An Min
{\em Spacelike hypersurfaces with constant Gauss-Kronecker curvature in the Minkowski space.}
Arch. Math. (Basel) 64 (1995), no. 6, 534-551.

\bibitem{LRW16}
Li, Ming; Ren, Changyu; Wang, Zhizhang
{\em An interior estimate for convex solutions and a rigidity theorem.}
J. Funct. Anal. 270 (2016), no. 7, 2691-2714.

\bibitem{RWX19}Ren, Changyu; Wang, Zhizhang; Xiao, Ling
{\em Entire spacelike hypersurfaces with constant $\sigma_{n-1}$ curvature in Minkowski space.}
eprint arXiv:2005.06109.

\bibitem{RWX20}
Ren, Changyu; Wang, Zhizhang; Xiao, Ling
{\em The prescribed curvature problem for entire hypersurfaces in Minkowski space.}
eprint arXiv:2007.04493.

\bibitem{SW10}
Sheng, WeiMin; Wu, Chao
{\em Rotationally symmetric translating soliton of Hk-flow.}
Sci. China Math. 53 (2010), no. 4, 1011-1016.

\bibitem{SX20}
Spruck, Joel; Xiao, Ling
{\em Complete translating solitons to the mean curvature flow in R3 with nonnegative mean curvature.}
Amer. J. Math. 142 (2020), no. 3, 993-1015.

\bibitem{Urb98}
Urbas, John
{\em Complete noncompact self-similar solutions of Gauss curvature flows. I. Positive powers.}
Math. Ann. 311 (1998), no. 2, 251-274.

\bibitem{WX20}
Wang, Zhizhang; Xiao, Ling;
{\em Entire spacelike hypersurfaces with constant $\s_k$ curvature in Minkowski space.}
Math. Ann. 382 (2022), no. 3-4, 1279-1322.

\bibitem{WXflow} Wang, Zhizhang; Xiao, Ling
{\em Entire convex curvature flow in Minkowski space.}
Preprint.

\bibitem{WX22}
Wang, Zhizhang; Xiao, Ling
{\em Entire $\sigma_k$ curvature flow in Minkowski space.}
Preprint.

\end{thebibliography}
\end{document}